\documentclass[11pt,a4paper,reqno]{amsart}

\usepackage{ulem}
 
 \usepackage{cite}
\usepackage{amsmath,amssymb,amsfonts,amsthm,mathtools}
\usepackage{enumerate}
\usepackage[colorlinks=true,citecolor=black,pagebackref=false]{hyperref}

\numberwithin{equation}{section}
\allowdisplaybreaks

\newcommand{\R}{\mathbb R}
\newcommand{\N}{\mathbb N}

\newcommand{\cZ}{\mathcal Z}

\makeatletter
\@namedef{subjclassname@2020}{\textup{2020} Mathematics Subject Classification}
\makeatother
\newtheorem{thm}{Theorem}[section]
\newtheorem{prop}[thm]{Proposition}
\newtheorem{lem}[thm]{Lemma}

\newtheorem{defi}[thm]{Definition}
\newtheorem{rem}[thm]{Remark}

\newtheorem{ques}[thm]{Question}

\title[]{On the spectrality of the non-homogeneous golden-mean self-similar measure}

\author{Yi-Qiu Mao}
\address{Y.-Q. Mao, School of Mathematics and Information Science, Guangzhou University, Guangzhou, 510006, P.~R.~China}
\email{yqmao@gzhu.edu.cn}

\author{Zhi-Yi Wu}
\address{Z.-Y. Wu, School of Mathematics and Information Science, Guangzhou University, Guangzhou, 510006, P.~R.~China}

\email{zywu@gzhu.edu.cn}

\subjclass[2020]{Primary 42B10; Secondary 28A80}

\keywords{golden-mean self-similar measure, spectrality, zero set}

\date{}

\begin{document}
\begin{abstract}
We investigate the spectral properties of a class of inhomogeneous self-similar measures, which does not admit a non-trivial infinite convolution structure. A central example is the golden-mean self-similar measure $\mu$, for which the existence of an exponential orthonormal basis in the associated $L^2$-space has remained a long-standing open problem. The usual approach for homogeneous self-similar measures  does not apply here, new methods are required. We establish several basic properties of the measure and then carry out a detailed numerical study of the zero set of its Fourier transform. Using a scanning and refinement algorithm that combines uniform grid sampling, quadratic interpolation, and golden-section search, we examine a wide range and find no real zeros of $\widehat{\mu}$, which provides concrete evidence that $\mu$ is very likely non-spectral, suggesting that inhomogeneity may serve as a natural obstruction to the existence of exponential orthonormal bases. To the best of our knowledge, our paper is the first attempt to study the spectrality of such measures through a combined analytic and numerical framework.
\end{abstract}

\maketitle
\section{Introduction}

A compactly supported Borel probability measure $\nu$ on $\mathbb{R}^d$ is called {\it spectral} if there exists a countable set $\Lambda\subseteq\mathbb{R}^d$ such that
$$
E(\Lambda)=\{e^{2\pi i\langle\lambda, x\rangle}:\lambda\in\Lambda\}
$$
forms an orthonormal basis for $L^{2}(\nu)$. The set $\Lambda$ is then called a {\it spectrum} of $\nu$.

Research on spectral measures originated with Fuglede \cite{Fug74}, who formulated his celebrated conjecture: the normalized Lebesgue measure on a set $\Omega\subseteq\mathbb{R}^d$ of positive finite Lebesgue measure is a spectral measure (also call $\Omega$ a {\it spectral set}) if and only if $\Omega$ tiles $\mathbb{R}^d$ by translations. Both directions of the conjecture are false for $d\ge 2$; see \cite{Z26} and the references therein. For $d=1$, however, both directions remain open. This problem has motivated extensive research into the properties that make a set or a measure spectral.

In 1998, Jorgensen and Pedersen \cite{JP98} constructed the first example of a non-atomic singular spectral measure, i.e., the standard middle-fourth Cantor measure $\mu_{4,\{0,2\}}$. Following this discovery, the spectrality of fractal measures including self-similar measures, self-affine measures, and Moran measures has been extensively explored; see \cite{AFL19,DC21,DHLai19,DL14,LW02} and the references therein. A crucial common feature of all these known spectral measures is that they can be expressed as an infinite convolution of finitely supported discrete measures. This convolution structure is what makes it possible to construct explicit spectra, and it remains the foundation of the known sufficient conditions for spectrality.

However, not all fractal measures admit such an infinite convolution structure. In this paper, we consider non-homogeneous self-similar measures. Let $\rho_1, \rho_2 \in (0, 1)$ and $p \in (0, 1)$. Let $f_{1, \rho_1}(x) = \rho_1 x$ and $f_{2, \rho_2}(x) = \rho_2(x + 2)$ be an iterated function system on $\mathbb{R}$. By Hutchinson's theorem \cite{Hut81}, there exists a unique Borel probability measure $\mu=\mu_{\rho_1, \rho_2}^p$ with non-empty compact support $K=K_{\rho_1,\rho_2}$ satisfying that
\begin{align}\label{def:self-s}
\mu(E) = p\,\mu\bigl(f_{1, \rho_1}^{-1}(E)\bigr) + (1 - p)\,\mu\bigl(f_{2, \rho_2}^{-1}(E)\bigr),
\end{align}
for any Borel set $E\subseteq \mathbb{R}$ and $K$ is the unique compact set satisfying
\begin{align}\label{defadd:self-sset}
K=f_{1,\rho_1}(K)\cup f_{2,\rho_2}(K).
\end{align}
The probability measure $\mu$ satisfying \eqref{def:self-s} is called a {\it self-similar measure} and $K$ is called a {\it self-similar set} or {\it attractor}. When $\rho_1 = \rho_2 =:\rho$, the measure $\mu=\mu_{\rho}^p := \mu_{\rho, \rho}^p$ is the classical Bernoulli convolution. In particular, taking $\rho = 1/4$ and $p=1/2$, $\mu_{\rho}^p$ is exactly the aforementioned $\mu_{4,\{0,2\}}$.

For the classical Bernoulli convolution, it is known that $\mu_{\rho}^p$ is a spectral measure if and only if $\rho =1/(2q)$ for some $q \in \N$ and $p = \frac{1}{2}$ \cite{Dai12,DHLai19}. For the case $\rho_1 \neq \rho_2$, its spectrality remains largely mysterious. The reason why the non-homogeneous case $\rho_1 \neq \rho_2$ is much more difficult than the homogeneous one is that while $\mu_\rho^p$ is an infinite convolution of Bernoulli measures, $\mu_{\rho_1, \rho_2}^p$ is not. The convolution structure is crucial to all the previous results. A long-standing folklore open problem in the community of fractal spectral measure theory is the following:

\begin{ques}
For $\rho_1\neq \rho_2$, when is $\mu_{\rho_1, \rho_2}^p$ a spectral measure?
\end{ques}

We write
\begin{equation}\label{eq:ssim}
s_{\lambda_1,\lambda_2}^p = \frac{p\log p + (1-p)\log(1-p)}{p\log\lambda_1 + (1-p)\log\lambda_2} \tag{1.2}
\end{equation}
for the similarity dimension of the measure $\mu_{\rho_1,\rho_2}^p$. To the best of our knowledge, apart from the classical setting, there are virtually no methods or results concerning the spectrality of $\mu_{\rho_1,\rho_2}^p$ in the absolutely continuous case. In this paper, we focus primarily on the singular case (a well-known sufficient condition for the measure to be singular is $s_{\rho_1,\rho_2}^p < 1$). However, even in this singular case, the problem remains notoriously difficult. Therefore, we restrict our attention to the condition $\rho_1+\rho_2 < 1$ (which is stronger than $s_{\rho_1,\rho_2}^p < 1$), under which the singularity of the measure is guaranteed and the analysis becomes relatively tractable.

In the case \(\rho_1 + \rho_2 < 1\), He, Kang, Tang and the second author \cite{HKTW18} obtained a necessary condition for the spectrality of \(\mu_{\rho_1, \rho_2}^p\). Specifically, they proved that if there exists a spectrum \(\Lambda\) of \(\mu_{\rho_1, \rho_2}^p\) such that its Beurling dimension is equal to the Hausdorff dimension of $K_{\rho_1,\rho_2}$ (a natural condition that holds in the classical Bernoulli convolution case), then \(p\) must satisfy \(p = \rho_1^s\),
where \(s\) is determined by \(\rho_1^s + \rho_2^s = 1\). In this setting, $s$ coincides with $s_{\rho_1,\rho_2}^p$ defined in \eqref{eq:ssim}.

On the other hand, up to now, every known singular continuous spectral measure has no Fourier decay. For $\mu_{\rho_1, \rho_2}^p$, it has Fourier decay whenever the two contraction ratios are not logarithmically commensurable, i.e., $\log_{\rho_2}\rho_1\notin \mathbb{Q}$ \cite{LS22}. Motivated by this fact, we shall restrict our attention to the logarithmically commensurable case.

In this paper, we focus on the simplest case of the general framework described above, namely the {\it golden-mean self-similar measure}, which we denote by $\mu$. It is defined by choosing the contraction ratios \(\rho_1 = \frac12, \rho_2 = \frac14\) and \(p = \left(\frac12\right)^s\),
where $s$ is determined by
\[
\left(\frac12\right)^s + \left(\frac14\right)^s = 1.
\]
Equivalently, $s = \log_2 \varphi$, where $\varphi = \frac{1+\sqrt{5}}{2}$ is the golden ratio; this gives rise to the name ``golden-mean'' self-similar measure.

There is no general result on Fourier decay for non-homogeneous self-similar measures with logarithmically commensurable contraction ratios. However, for the golden-mean measure considered in this paper, we can prove that it has no Fourier decay (see Proposition \ref{prop:non-convol} in Section 3).

It is well known that if $\mu$ is a spectral measure with spectrum $\Lambda$, then the orthogonality of $E(\Lambda)$ implies that
$
\widehat{\mu}(\lambda-\lambda')=0
$
for all distinct $\lambda,\lambda'\in\Lambda$, where
\[\widehat{\mu}(\xi)=\int e^{-2\pi i \xi x}\,\mathrm{d}\mu(x)\]
is the Fourier transform of $\mu$.  Since $\mu$ is non-atomic, $L^2(\mu)$ is infinite-dimensional; consequently, any spectrum $\Lambda$ must be countably infinite, and $\widehat{\mu}$ must vanish on the infinite difference set $(\Lambda-\Lambda)\setminus\{0\}$. Thus, understanding the zero set of $\widehat{\mu}$ is a natural first step toward determining the spectrality of $\mu$. In particular, if $\widehat{\mu}$ has no real zeros, or has at most finitely many real zeros, then $\mu$ cannot admit an infinite orthogonal system of exponentials, and hence cannot be spectral.

In this paper, we first set up some basic properties of the gold-mean self-similar measure $\mu$. Then we develop a scanning and refinement algorithm that combines uniform grid sampling, complex quadratic interpolation, and golden-section search to locate potential zeros of the auxiliary Riccati function $q(\xi)=\widehat{\mu}(\xi)/\widehat{\mu}(\xi/2)$ with high precision. The algorithm proceeds in two phases: a coarse scan over a uniform grid to detect local minima of $|q|$, followed by a refined search using quadratic interpolation and golden-section search to pinpoint candidate zeros. Using this algorithm, we examine the interval $[-10^7,10^7]$ and find no real zeros of $\widehat{\mu}$. Together with the theoretical analysis on the interval $[-1.08,1.08]$ (see Proposition~\ref{prop-nozero}), this provides concrete numerical evidence that $\mu$ is very likely non-spectral.

Although the interval $[-1.08,1.08]$ derived by the rigorous analysis may seem small, this is not an essential limitation. A larger interval could be obtained by more refined estimates, but the required technical effort would be considerable and, in our view, disproportionate to the additional benefit, especially since the numerical scan already rules out zeros with high precision over the much larger interval $[-10^7,10^7]$.

\section{Elementary properties of the golden-mean self-similar measure}
In this section, we will set up some basic properties of the gold-mean self-similar measure $\mu$. Some of these may be known in more general case, but we give simpler proofs here. For simplicity, we write $f_1(x)=f_{1,1/2}(x)=x/2$ and $f_2(x)=f_{2,1/4}(x)=(x+2)/4$. Let $\Sigma_2=\{0,1\}$. For $n\ge1$, let $\Sigma_2^n=\{I=i_1i_2\cdots i_n: i_k\in\Sigma_2~\text{for}~1\leq k\leq n\}$ be the set of words of length $n$ and let $\Sigma_2^{\infty}=\{I=i_1i_2\cdots: i_k\in\Sigma_2~\text{for}~k\leq 1\}$ be the set of infinite words.

\subsection{Self-similar set and binary expansions}

We now give a convenient description of the self-similar set $K$ associated to the gold-mean self-similar measure in terms of binary expansions.
\begin{prop}\label{prop:shows}
Let $K$ be the attractor of the golden-mean iterated function system $\{f_1,f_2\}$. Then
\[
K=
\left\{
\sum_{n=1}^{\infty}\frac{\varepsilon_n}{2^n}:
\varepsilon_n\in\Sigma_2
\text{ and }
\varepsilon_n\varepsilon_{n+1}=0
\text{ for every }n\geq 1
\right\}.
\]
Equivalently, \(K\) consists of the points in \([0,1]\) that admit a binary expansion with no two consecutive \(1\)'s.
\end{prop}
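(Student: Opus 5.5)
Let $A$ denote the set on the right-hand side. The plan is to invoke the uniqueness part of Hutchinson's theorem: $K$ is the unique non-empty compact set satisfying \eqref{defadd:self-sset}. So it suffices to show two things: $A$ is non-empty and compact, and $A=f_1(A)\cup f_2(A)$.

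For compactness, I will realize $A$ as a continuous image of a compact space. Let $\Omega\subseteq\Sigma_2^{\infty}$ be the subshift of infinite words with no two consecutive $1$'s. This is a closed subset of the compact product space $\Sigma_2^{\infty}$, because its complement is a union of cylinder sets, hence open. The coding map
\[
\pi(\varepsilon_1\varepsilon_2\cdots)=\sum_{n\ge1}\varepsilon_n2^{-n}
\]
is continuous, since it is $1$-Lipschitz with respect to the metric $2^{-\min\{n:\varepsilon_n\neq\varepsilon'_n\}}$. Hence $A=\pi(\Omega)$ is compact, and it is non-empty because $0\in A$.

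For the set equation, the key is the shift structure of $\Omega$. Every admissible word $\varepsilon\in\Omega$ has one of two forms:
\begin{itemize}
\item $\varepsilon_1=0$ and $\varepsilon_2\varepsilon_3\cdots\in\Omega$, in which case $\pi(\varepsilon)=\tfrac12\pi(\varepsilon_2\varepsilon_3\cdots)=f_1(\pi(\varepsilon_2\cdots))$;
\item $\varepsilon_1=1$, which forces $\varepsilon_2=0$ and $\varepsilon_3\varepsilon_4\cdots\in\Omega$, in which case $\pi(\varepsilon)=\tfrac12+\tfrac14\pi(\varepsilon_3\cdots)=f_2(\pi(\varepsilon_3\cdots))$, using $f_2(x)=(x+2)/4=\tfrac12+\tfrac x4$.
\end{itemize}
Conversely, prefixing any $\omega\in\Omega$ with $0$, or with $10$, produces an element of $\Omega$. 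By the same two computations this shows $f_1(A)\subseteq A$ and $f_2(A)\subseteq A$. Together these give $A=f_1(A)\cup f_2(A)$, and uniqueness then yields $K=A$.

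For the ``equivalently'' clause, note that $A\subseteq[0,1]$. The phrasing ``admit a binary expansion'' is exactly membership in $A$, so nothing further is needed beyond the remark that non-unique expansions cause no trouble, since only existence of an admissible expansion is required. The main obstacle is minor: one must check that the word $10$ prefix correctly corresponds to $f_2$ and that the closedness of $\Omega$ is justified. Both are routine, so I expect the proof to be short.
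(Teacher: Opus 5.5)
Your proposal is correct and follows the paper's proof essentially step for step: the no-$11$ subshift is a closed, hence compact, subset of $\Sigma_2^{\infty}$; the coding map is continuous; the first-digit split ($0\mapsto f_1$, $10\mapsto f_2$) and the converse prefixing give the set equation; and uniqueness in Hutchinson's theorem finishes the argument. One harmless slip: for your metric the coding map is Lipschitz with constant $2$, not $1$, because the tail bound is $\sum_{k\ge n}2^{-k}=2^{1-n}$; this still gives continuity.
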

\begin{proof}
Let
\[
A=\{\{\varepsilon_n\}_{n\ge1}\in\Sigma_2^{\infty}:\varepsilon_n\varepsilon_{n+1}=0\text{ for all }n\ge1\}.
\]
Equip $\Sigma_2^{\infty}$ with the standard product topology and it is well-known that  $\Sigma_2^{\infty}$ is compact. It is easy to see that the set $A$ is closed and thus $A$ is compact. Note that the map
\[
\Phi(\{\varepsilon_n\})=\sum_{n=1}^\infty \frac{\varepsilon_n}{2^n},\quad \{\varepsilon_n\}_{n\ge1}\in\Sigma_2^{\infty}
\]
is continuous and so $S=\Phi(A)$ is compact. Moreover, $S$ is nonempty because the zero sequence belongs to $A$. Then, by \eqref{defadd:self-sset} we only need to show that $S$ satisfies
\[
S = f_1(S)\cup f_2(S).
\]

Take $x\in S$ and write $x = \sum_{n=1}^\infty a_n/2^n$ with all $a_n\in\{0,1\}$. If $a_1 = 0$, then
\[
x = \sum_{n=2}^{\infty} \frac{a_n}{2^n}
    = \frac12 \sum_{n=1}^{\infty} \frac{a_{n+1}}{2^n}=:\frac{1}{2}y.
\]
Since $\{a_{n+1}\}_{n=1}^\infty$ contains no consecutive $1$'s, it follows that $y\in S$ and $x = f_1(y) \in f_1(S)$. If $a_1 = 1$, then the definition of $S$ forces $a_2 = 0$. Hence
\[
x = \frac12 + \frac{0}{4} + \sum_{n=3}^{\infty} \frac{a_n}{2^n}
    = \frac12 + \frac14 \sum_{n=1}^{\infty} \frac{a_{n+2}}{2^n}=:\frac12 + \frac14z.
\]
The sequence $(a_{n+2})_{n=1}^\infty$  again contains no consecutive $1$'s.Then $z \in S$ and $x = f_2(z) \in f_2(S)$. In either case $x$ lies in $f_1(S)\cup f_2(S)$, proving $S\subseteq f_1(S)\cup f_2(S)$.

To prove the other direction, if $x\in f_1(S)$, then $x = y/2$ for some $y\in S$. Write $y = \sum_{n=1}^\infty b_n/2^n$ with all $b_n\in\{0,1\}$. Then
\[
x = \sum_{n=1}^{\infty} \frac{b_n}{2^{n+1}}
    = \sum_{n=1}^{\infty} \frac{a_n}{2^n},
\]
where $a_1 = 0$ and $a_{n+1} = b_n$ for $n\ge 1$. The sequence $\{a_n\}$ contains no consecutive $1$'s from $\{b_n\}$ and then $x\in S$. If $x\in f_2(S)$. Then $x = \frac12 + z/4$ for some $z\in S$. Write $z = \sum_{n=1}^\infty b_n/2^n$ with all $b_n\in\{0,1\}$. Then
\[
x = \frac12 + \sum_{n=1}^{\infty} \frac{b_n}{2^{n+2}}
    = \frac12+ \sum_{n=3}^{\infty} \frac{b_{n-2}}{2^n}\in S.
\]
Thus $f_1(S)\cup f_2(S)\subseteq S$.
\end{proof}

\subsection{Singular continuity of \(\mu\)}

\begin{prop}\label{prop:mu-singular}
The golden-mean self-similar measure \(\mu\) is non-atomic and singular with respect to Lebesgue measure.
\end{prop}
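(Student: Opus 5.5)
Non-atomicity and singularity will be handled separately, both through the self-similarity relation \eqref{def:self-s} with $p=\varphi^{-1}$, $1-p=\varphi^{-2}$ (from $2^{-s}=1/\varphi$ and $4^{-s}=1/\varphi^2$, since $p+p^2=1$).

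\emph{Non-atomicity.} Let $m=\sup_{x}\mu(\{x\})$; the supremum is attained because $\mu$ is a finite measure, so only finitely many atoms exceed any positive level. Pick $x_0$ with $\mu(\{x_0\})=m$. Applying \eqref{def:self-s} to $E=\{x_0\}$ gives
\[
m=p\,\mu(\{f_1^{-1}(x_0)\})+(1-p)\,\mu(\{f_2^{-1}(x_0)\}).
\]
Each term on the right is at most $m$, and $p+(1-p)=1$, so both preimages must also carry mass $m$. Iterating, every point $f_I^{-1}(x_0)$ with $I\in\Sigma_2^n$ has mass $m$. The maps $f_I^{-1}$ are distinct affine maps, and their values at $x_0$ are pairwise distinct for distinct words of equal length, since the images $f_I(K)$ give distinct codings; this needs a little care. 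A cleaner route is to note that the point $y=f_1^{-n}(x_0)=2^nx_0$ must have mass $m$ for every $n$. If $x_0\neq 0$, then $2^nx_0$ leaves $[0,1]\supseteq K$, which forces $m=0$. If $x_0=0$, use the other branch: $f_2^{-1}(0)=-2\notin K$, so $\mu(\{-2\})=0$ must equal $m$. Hence $m=0$.

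\emph{Singularity.} I plan to show that $K$ has Lebesgue measure zero. Since $\mu(K)=1$, this gives $\mu\perp\mathcal L$. By Proposition~\ref{prop:shows}, $K\subseteq\bigcup_{w}[0.w,\,0.w+2^{-n}]$, where $w$ ranges over binary words of length $n$ with no two consecutive $1$'s. The number of such words is the Fibonacci number $F_{n+2}\asymp\varphi^n$. Therefore
\[
\mathcal L(K)\le F_{n+2}2^{-n}\asymp(\varphi/2)^n\to0.
\]
Equivalently, via the IFS, $\mathcal L(K)\le\frac12\mathcal L(K)+\frac14\mathcal L(K)$, which forces $\mathcal L(K)=0$ because $\mathcal L(K)<\infty$. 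This last one-line argument is what I would actually write.

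The only delicate point is the non-atomicity argument: checking that the maximal-atom trick closes, which depends on which preimages fall outside $K$. The singularity argument is routine.
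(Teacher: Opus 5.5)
Your proof is correct. The singularity half is essentially the paper's argument, but the non-atomicity half takes a different route.

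\textbf{Non-atomicity.} The paper first proves strong separation, $f_1(K)\subseteq[0,\frac13]$ and $f_2(K)\subseteq[\frac12,\frac23]$. This gives exact cylinder masses $\mu(f_{\mathbf i}(K))=p_{i_1}\cdots p_{i_n}\le p_{\max}^n$. The paper then traps each point in a nested sequence of cylinders.

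You instead use the extremal-atom argument. A point $x_0$ of maximal mass $m$ forces both of its preimages to carry mass $m$. The points $2^nx_0$ (when $x_0\neq 0$), or the single point $f_2^{-1}(0)=-2$ (when $x_0=0$), leave the bounded support, so $m=0$. The case split $x_0\neq 0$ versus $x_0=0$ already closes the argument completely. Your hedged first sketch via distinctness of the points $f_I^{-1}(x_0)$ is unnecessary and can be dropped, as can your closing worry about whether the trick ``closes.''

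What each route buys:
\begin{itemize}
\item Your argument uses no separation at all. It only needs that $f_1$ and $f_2$ have no common fixed point ($0$ versus $\frac23$), so it applies verbatim to overlapping systems.
\item The paper's route establishes the separation and exact cylinder masses as a by-product. These are reused later, e.g.\ $\mu([0,\frac16])=\mu([\frac12,\frac23])=p^2$ in Proposition~\ref{prop-nozero}.
\end{itemize}

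\textbf{Singularity.} Your inequality $\mathcal L(K)\le\frac12\mathcal L(K)+\frac14\mathcal L(K)$, together with $\mathcal L(K)<\infty$, is the same subadditivity idea as the paper's covering by the intervals $f_{\mathbf i}([0,\frac23])$ of total length $\frac23(\frac34)^n$. You apply it once instead of $n$ times. The Fibonacci count $F_{n+2}2^{-n}\to0$ of admissible dyadic intervals is an equally valid alternative.
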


\begin{proof}
We first prove that $\mu$ is non-atomic. Let \(p_1=2^{-s}\) and \(p_2=4^{-s}\), where \(s>0\) is defined by \(p_1+p_2=1\). Then \(0<p_1,p_2<1\).

For a word \(\mathbf i=i_1\cdots i_n\) with $i_j\in \{1,2\}$ for $1\leq j\leq n$, write \(f_{\mathbf i}=f_{i_1}\circ\cdots\circ f_{i_n}\) and \(K_{\mathbf i}=f_{\mathbf i}(K)\). By Proposition \ref{prop:shows}, for any \(x=\sum_{n\ge1}\varepsilon_n2^{-n}\in K\) with all $\varepsilon_n\in\{0,1\}$, the no consecutive ones constraint gives
\[
x\le \sum_{k=0}^{\infty}2^{-(2k+1)}
=\frac23,
\]
i.e., $x\in\big[0,\frac23\big]$. Hence \(f_1(K)=\frac12K\subseteq[0,\frac13]\), and
\[
f_2(K)=\frac14(K+2)\subseteq \frac14\left[0,\frac23\right]+\frac12
=\left[\frac12,\frac23\right],
\]
which implies that \(f_1(K)\cap f_2(K)=\varnothing\). Therefore, by \eqref{def:self-s}
\[
\mu(K_{\mathbf i})=p_{i_1}\cdots p_{i_n}\le p_{\max}^n,
\]
where $p_{\max}:=\max\{p_1,p_2\}<1$.

For any \(x\in K\), there exists an \(\omega=\omega_1\omega_2\cdots\) with all $\omega_i\in\{1,2\}$ such that \(x\in K_{\omega_1\cdots\omega_n}\) for every \(n\). Therefore
\[
\mu(\{x\})\le \mu(K_{\omega_1\cdots\omega_n})\le p_{\max}^n\to 0,
\]
so \(\mu(\{x\})=0\). Thus \(\mu\) has no atoms.

It remains to show \(\mu\perp\mathcal{L}\), where $\mathcal{L}$ denotes the Lebesgue measure. Let \(I=[0,2/3]\). For $n\geq1$ and each \(\mathbf i=i_1i_2\cdots i_n\) with all $i_j\in\{1,2\}$, denote \(I_{\mathbf i}=f_{\mathbf i}(I)\). Its length is $|I_{\mathbf i}|=|I|\prod_{j=1}^n \rho_{i_j}$, where $\rho_1=1/2$ and $\rho_2=1/4$.

Since \(K\subseteq\bigcup_{|\mathbf i|=n}K_{\mathbf i}\subseteq \bigcup_{|\mathbf i|=n}I_{\mathbf i}\), it follows that
\[
\mathcal{L}(K)\le \sum_{|\mathbf i|=n} |I_{\mathbf i}|
=\frac23 \sum_{|\mathbf i|=n}\prod_{j=1}^n \rho_{i_j}
=\frac23\left(\frac12+\frac14\right)^n
=\frac23\left(\frac34\right)^n,
\]
where $|\mathbf i|$ denotes the length of $\mathbf i$. Thus \(\mathcal{L}(K)=0\) by letting $n\to\infty$. Since \(\mu\) is supported on \(K\), it follows that \(\mu\perp\mathcal{L}\). The proof is complete.
\end{proof}

\subsection{Not an infinite Bernoulli convolution}

\begin{thm}\label{thm-nonconvolution}
The golden-mean self-similar measure \(\mu\) cannot be represented as an infinite convolution of Bernoulli measures.
\end{thm}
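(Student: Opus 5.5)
The plan is to argue by contradiction using a purely geometric invariant: the support of any (weakly convergent) infinite convolution of Bernoulli measures is centrally symmetric about some point. I will then check that the attractor $K$ described in Proposition \ref{prop:shows} fails this symmetry.

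First, suppose $\mu=\nu_1*\nu_2*\cdots$, where each $\nu_k=q_k\delta_{a_k}+(1-q_k)\delta_{b_k}$ with $q_k\in(0,1)$ and $a_k\neq b_k$, and the partial convolutions $\mu_n=\nu_1*\cdots*\nu_n$ converge weakly to $\mu$. If some $q_k\in\{0,1\}$, that factor is a Dirac mass and only translates the measure, so it is harmless and can be absorbed into the others.

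The support of $\mu_n$ is the finite sum set $A_n=\{a_1,b_1\}+\cdots+\{a_n,b_n\}$. Each two-point set $\{a_k,b_k\}$ is symmetric about $c_k=(a_k+b_k)/2$, so $A_n$ is symmetric about $C_n=c_1+\cdots+c_n$.

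Second, I pass to the limit. Since $\mu$ is compactly supported, with $\operatorname{supp}\mu=K\subseteq[0,\frac23]$, and $\mu_n\to\mu$ weakly, I will show that the distance between $\min A_n$ and $\max A_n$ stays bounded and that $C_n$ converges to some $C$. The cleanest route is through convergence of the infinite product $\widehat{\mu}(\xi)=\prod_k\widehat{\nu_k}(\xi)$. Alternatively, one can write $\mu=\mu_n*\omega_n$ with $\omega_n$ the tail convolution, and note that $\operatorname{supp}\mu=\overline{A_n+\operatorname{supp}\omega_n}$. Running the same argument on both sides shows that $K$ is symmetric about $C$. Since $\min K=0$ and $\max K=\frac23$, we get $C=\frac13$, so $x\in K$ implies $\frac23-x\in K$. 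This convergence and passage to supports is the step I expect to need the most care. The tail $\omega_n$ is itself an infinite convolution, hence its support is symmetric about its own center, and $K=A_n+\operatorname{supp}\omega_n$ (closure) is then symmetric about the sum of the two centers. This formulation avoids estimating $C_n$ directly.

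Third, I exhibit the asymmetry. The point $\frac14=(0.01)_2$ lies in $K$ by Proposition \ref{prop:shows}. However,
\[
\tfrac23-\tfrac14=\tfrac5{12}=(0.0110101\cdots)_2 .
\]
Since $\frac5{12}$ is not a dyadic rational, this binary expansion is unique, and it contains two consecutive $1$'s. Hence $\frac5{12}\notin K$, contradicting the symmetry.

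Therefore $\mu$ admits no representation as an infinite convolution of Bernoulli measures. The argument uses neither the weights nor the equal-weight case, so it rules out arbitrary Bernoulli factors.
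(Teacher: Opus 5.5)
Your overall strategy is the paper's: the support of a Bernoulli convolution is centrally symmetric, the only possible centre of $K$ is $\frac13$, and $\frac14\in K$ while $\frac5{12}\notin K$. Your remark that $\frac5{12}$ is not dyadic, so its binary expansion is unique, usefully fills a point the paper leaves implicit.

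The difference is in the definition. The paper builds $\sum_k r_k<\infty$ into it, with factors $p_k\delta_0+(1-p_k)\delta_{r_k}$. So the support is exactly $\mathcal A(r_k)$, and its symmetry (Proposition~\ref{prop:ssymefea}) is a one-line observation. You allow arbitrary two-point factors and only weak convergence of the partial convolutions. In that setting your Step 2 is announced (``I will show'') but not proved.

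The formulation you offer to ``avoid estimating $C_n$'' is circular. It asserts that $\operatorname{supp}\omega_n$ is symmetric because $\omega_n$ ``is itself an infinite convolution'', which is exactly the claim under proof. That claim is also false in your generality unless boundedness is genuinely used. For $\nu_k=(1-2^{-k})\delta_0+2^{-k}\delta_1$, the random series has a.s.\ finitely many nonzero terms, so the partial convolutions converge weakly. Yet the limit has support $\{0,1,2,\dots\}$, which is not centrally symmetric. So your opening claim must be qualified, and the compactness of $K$ has to enter through the route you only sketch.

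That route does work. Put $\tau_{n,m}=\nu_{n+1}*\cdots*\nu_m$. An atom $x_0$ of $\mu_n$ with mass $w>0$ gives $\mu_m(B)\ge w\,\tau_{n,m}(B-x_0)$, so $(\tau_{n,m})_m$ is tight. A subsequential limit $\omega_n$ then satisfies $\mu=\mu_n*\omega_n$, hence $A_n+\operatorname{supp}\omega_n\subseteq K$. This gives $\sum_{k\le n}|b_k-a_k|=\operatorname{diam}A_n\le\frac23$ for every $n$, so $\sum_k d_k<\infty$ with $d_k=\frac12|b_k-a_k|$.

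Next, $\mu_n$ is carried by $[C_n-\frac13,C_n+\frac13]$ and $(\mu_n)$ is tight, so $(C_n)$ is bounded. Along a subsequence $C_{n_j}\to C$. Then $\mu_{n_j}*\delta_{-C_{n_j}}$ converges to the convolution $\tilde\mu$ of the centred factors $q_k\delta_{-d_k}+(1-q_k)\delta_{d_k}$; the Fourier product converges because $|\widehat{\tilde\nu_k}(\xi)-1|\le 2\pi|\xi|d_k$ is summable. Thus $\mu=\delta_C*\tilde\mu$, and $K=C+\{\sum_k\pm d_k\}$ is symmetric about $C$.

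With this inserted your argument is complete and proves a slightly stronger statement than the paper's. Alternatively, adopting the paper's definition makes Step 2 immediate.
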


To prove it, we need some preliminaries. We first recall the definition of an infinite convolution of Bernoulli measures.
\begin{defi}
\rm Let $\{r_k\}_{k\ge 1}$ be a sequence of positive real numbers satisfying $\sum_{k=1}^{\infty} r_k < \infty$, and let $\{p_k\}_{k\ge 1}$ be a sequence with $p_k\in(0,1)$ for all $k\ge 1$. We say that
\begin{align}\label{def:bernoullic}
\nu=\mathop{*}_{k=1}^{\infty}
\bigl(p_k\delta_0+(1-p_k)\delta_{r_k}\bigr)
\end{align}
is an {\it infinite convolution of Bernoulli measures}, where the convergence is in weak sense.

It is easy to see that the support of $\nu=\mathop{*}_{k=1}^{\infty}
\bigl(p_k\delta_0+(1-p_k)\delta_{r_k}\bigr)$ is the following set
\[
\mathcal A(r_k)
=
\left\{
\sum_{k=1}^{\infty}\varepsilon_k r_k:
\varepsilon_k\in\{0,1\}
\right\}.
\]
\end{defi}

\begin{prop}\label{prop:ssymefea}
Let $\nu$ be an infinite convolution of Bernoulli measures defined in \ref{def:bernoullic}. Then \(\mathcal A(r_k)\) is centrally symmetric with respect to \(\frac12\sum_{k=1}^{\infty}r_k\), i.e., if
$x\in\mathcal A(r_k)$, then  $\sum_{k=1}^{\infty}r_k-x\in\mathcal A(r_k)$.
\end{prop}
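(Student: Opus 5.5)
The idea is to flip every binary digit. Take $x\in\mathcal A(r_k)$ and write it as $x=\sum_{k=1}^\infty \varepsilon_k r_k$ with $\varepsilon_k\in\{0,1\}$; the definition of $\mathcal A(r_k)$ guarantees such a choice. Put $S=\sum_{k=1}^\infty r_k$, which is finite by the standing assumption $\sum_k r_k<\infty$. Then $1-\varepsilon_k\in\{0,1\}$ for every $k$, and I will show that
\[
S-x=\sum_{k=1}^{\infty}(1-\varepsilon_k)\,r_k .
\]
This expresses $S-x$ as an element of $\mathcal A(r_k)$, with digit sequence $\{1-\varepsilon_k\}_{k\ge1}\in\Sigma_2^{\infty}$.

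The only point needing justification is the term-by-term subtraction of the two series. Both $\sum_k r_k$ and $\sum_k \varepsilon_k r_k$ converge absolutely, since $0\le \varepsilon_k r_k\le r_k$ and $r_k>0$. So the partial sums satisfy
\[
\sum_{k=1}^{N}(1-\varepsilon_k)r_k=\sum_{k=1}^{N}r_k-\sum_{k=1}^{N}\varepsilon_k r_k .
\]
Letting $N\to\infty$ gives the identity above, and the series on its right converges by comparison with $\sum_k r_k$.

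Finally, the map $x\mapsto S-x$ is the reflection of $\mathbb R$ about the point $\frac12 S$. So the inclusion $S-\mathcal A(r_k)\subseteq\mathcal A(r_k)$ is exactly the claimed central symmetry about $\frac12\sum_k r_k$. Applying the inclusion twice gives equality, although the statement only requires the inclusion.

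I do not expect any real obstacle here. The convergence remark is the only step beyond pure bookkeeping, and it follows directly from the summability hypothesis in the definition of an infinite convolution of Bernoulli measures.
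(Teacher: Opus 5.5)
Your proposal is correct and is essentially the paper's proof: both flip each digit $\varepsilon_k\mapsto 1-\varepsilon_k$ to write $\sum_{k}r_k-x=\sum_{k}(1-\varepsilon_k)r_k\in\mathcal A(r_k)$. Your extra justification of the termwise subtraction via convergence of partial sums is a harmless detail that the paper leaves implicit.
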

\begin{proof}
Let \(x=\sum_{k=1}^{\infty}\varepsilon_k r_k
\in\mathcal A(r_k)\) with
\(\varepsilon_k\in\{0,1\}\). Then
\[
\sum_{k=1}^{\infty}r_k-x
=
\sum_{k=1}^{\infty}(1-\varepsilon_k)r_k\in \mathcal A(r_k),
\]
since \(1-\varepsilon_k\in\{0,1\}\).
\end{proof}

\begin{lem}\label{lem:daewfawfeew}
The attractor \(K\) of the golden-mean iterated function system is not centrally symmetric.
\end{lem}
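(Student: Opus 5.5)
Central symmetry of a compact set $K\subseteq\R$ can only be about the midpoint of its convex hull, since the reflection $x\mapsto c-x$ must send $\min K$ to $\max K$. So the plan is to identify $\min K$ and $\max K$, which forces the candidate centre, and then exhibit a single point $x\in K$ whose reflection $\min K+\max K-x$ is not in $K$.

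\textbf{Step 1: the endpoints.} The zero sequence shows $0\in K$. The computation in the proof of Proposition~\ref{prop:mu-singular} gives $K\subseteq[0,\frac23]$, and the admissible sequence $101010\cdots$ shows $\frac23\in K$. Hence $\min K=0$ and $\max K=\frac23$. If $K$ were centrally symmetric about some $c$, the reflection $x\mapsto 2c-x$ maps $K$ onto $K$ and reverses order, so it swaps $\min K$ and $\max K$. This forces $2c=\frac23$, i.e.\ $c=\frac13$, and the symmetry would be $x\mapsto \frac23-x$.

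\textbf{Step 2: a gap.} From the proof of Proposition~\ref{prop:mu-singular} we have $f_1(K)\subseteq[0,\frac13]$ and $f_2(K)\subseteq[\frac12,\frac23]$. By \eqref{defadd:self-sset} it follows that $K\cap(\frac13,\frac12)=\varnothing$.

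\textbf{Step 3: a violating point.} Take $x=\frac14\in K$, which is the expansion $0100\cdots$. Its reflection is $\frac23-\frac14=\frac5{12}$, which lies in the gap $(\frac13,\frac12)$. So $\frac5{12}\notin K$, and $K$ is not symmetric about $\frac13$. With Step 1, this shows $K$ is not centrally symmetric about any point.

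The only delicate step is Step 1, namely the observation that any symmetry centre must be the midpoint of $[\min K,\max K]$. This is elementary but is needed so that the counterexample rules out every possible centre and not just $\frac13$. The rest is direct computation from Proposition~\ref{prop:shows} and the inclusions already established in the proof of Proposition~\ref{prop:mu-singular}.
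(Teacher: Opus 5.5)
Your proof is correct and follows the paper's argument: you pin down $\min K=0$ and $\max K=\frac23$, force the centre to be $\frac13$, and reflect $\frac14\in K$ to $\frac5{12}$. The only difference is how you exclude $\frac5{12}$: the paper uses its binary expansion $0.01\overline{10}_2$, while you use the first-level gap $(\frac13,\frac12)$, which neatly sidesteps the point that this expansion must be unique because $\frac5{12}$ is not dyadic.
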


\begin{proof}
We first note that \(\min K=0\) and \(\max K=\frac23\) by Proposition \ref{prop:shows}. If \(K\) is centrally symmetric, then its center would necessarily be
the midpoint of its smallest and largest points. Thus the only possible
center would be
\[
c=\frac{0+2/3}{2}=\frac13.
\]

Note that
\(\frac14=\sum_{j=1}^{\infty}\frac{\varepsilon_j}{2^j}\) with
$\{\varepsilon_j\}_{j\geq1}=\{0,1,0,0,\ldots\}$. Then \(\frac14\in K\). Consider $2c-\frac14=\frac5{12}$. It is easy to see that
\[
\frac5{12}=0.011010101\cdots{}_2=0.01\overline{10}_2
\]
whose binary expansion contains two consecutive \(1\)'s. Hence \(\frac5{12}\notin K\).

Thus \(K\) is not symmetric with respect to \(1/3\), and consequently it is
not centrally symmetric.
\end{proof}

\begin{proof}[Proof of Theorem~\ref{thm-nonconvolution}]
Suppose, to the contrary, that,
\[
\mu=
\mathop{*}_{k=1}^{\infty}
\bigl(p_k\delta_0+(1-p_k)\delta_{r_k}\bigr),
\]
where \(
r_k>0\), \(p_k\in(0,1)\) and \(\sum_{k=1}^{\infty}r_k<\infty\).

The support of the convolution on the right-hand side is
\[
\mathcal A(r_k)
=
\left\{
\sum_{k=1}^{\infty}\varepsilon_k r_k:
\varepsilon_k\in\{0,1\}
\right\}.
\]
By Proposition \ref{prop:ssymefea}, \(\mathcal A(r_k)\) is centrally symmetric.
Consequently, any translate of \(\mathcal A(r_k)\) is also centrally
symmetric.

On the other hand, the support of the golden-mean self-similar measure is
the attractor \(K\), and Lemma \ref{lem:daewfawfeew} shows that \(K\) is not
centrally symmetric. This is a contradiction.

Hence, we complete the proof.
\end{proof}

\begin{rem}
Theorem \ref{thm-nonconvolution} merely demonstrates that $\mu$ cannot arise as an infinite convolution of two-point probability measures. In fact, Professor Guo-Tai Deng \cite{Deng26} pointed to us that the $\mu$ cannot be expressed as a non-trivial infinite convolution of finite discrete measures, which needs a more delicate analysis.
\end{rem}

\section{The Fourier transform and Riccati equation}
Write \(F(\xi)=\widehat{\mu}(\xi)\) and set
\(p=\bigl(\tfrac12\bigr)^s=\frac{\sqrt{5}-1}{2}\).
From \eqref{def:self-s}, we obtain for any $\xi\in \mathbb{R}$,
\begin{equation}\label{eq:3.1}
F(\xi)=pF(\xi/2)+p^2 e^{-\pi i\xi}F(\xi/4).
\end{equation}
By the definition of the Fourier transform of \(\mu\), $F(0)=1$.

Define auxiliary measures $\mu_A(\cdot)=\mu(\cdot)$, $\mu_B(\cdot)=\mu(f_1^{-1}(\cdot))$
and their Fourier transforms $F_A,F_B$. Since \(f_1(x)=x/2\), we have
\begin{equation}\label{eq:addfoure}
\begin{aligned}
F_B(\xi)&= \widehat{\mu_B}(\xi)= \int_{\mathbb{R}} e^{-2\pi i \xi x}\,\mathrm{d}\mu_B(x)= \int_{\mathbb{R}} e^{-2\pi i \xi f_1(y)}\,\mathrm{d}\mu(y)\\
&= \int_{\mathbb{R}} e^{-2\pi i \xi y/2}\,\mathrm{d}\mu(y)= F_A(\xi/2).
\end{aligned}
\end{equation}

Using \eqref{eq:3.1} and \eqref{eq:addfoure}, we compute
\[
F_A(\xi)
= pF_A(\xi/2)+p^2 e^{-\pi i\xi}F_A(\xi/4)
= pF_A(\xi/2)+p^2 e^{-\pi i\xi}F_B(\xi/2).
\]
Together with \eqref{eq:addfoure} again, this yields the matrix recursion
\begin{equation}\label{eq:matrix-recursion}
 \begin{pmatrix}F_A(\xi)\\F_B(\xi)\end{pmatrix}
 =
 M(\xi)
 \begin{pmatrix}F_A(\xi/2)\\F_B(\xi/2)\end{pmatrix},
\end{equation}
where
\[
 M(\xi)=
 \begin{pmatrix}
  p & p^{2}e^{-\pi i\xi}\\
  1 & 0
 \end{pmatrix}.
\]
Iterating \eqref{eq:matrix-recursion} \(N\) times gives
\[
\begin{pmatrix}F(\xi)\\F(\xi/2)\end{pmatrix}
=
\Biggl(\prod_{k=0}^{N-1}M(\xi/2^{k})\Biggr)
\begin{pmatrix}F_A(\xi/2^N)\\F_B(\xi/2^N)\end{pmatrix}.
\]
By the continuity of the Fourier transform of a measure, taking $N\to\infty$ yields the convergent representation
\begin{equation}\label{eq:infinite-product}
 \begin{pmatrix}F(\xi)\\F(\xi/2)\end{pmatrix}
 =
 \lim_{N\to\infty}
 \Biggl(\prod_{k=0}^{N-1}M(\xi/2^{k})\Biggr)
 \begin{pmatrix}1\\1\end{pmatrix}.
\end{equation}

\begin{prop}\label{prop:zero-rigidity}
For every $\xi\in\R$, $(F_A(\xi),F_B(\xi))\neq(0,0)$.
Consequently, if \(F(\xi)=0\), then \(F(\xi/2)\neq0\). In particular, every real zero \(\xi\) of \(F\) satisfies
\begin{equation}\label{eq:zero-phase}
 \frac{F(\xi/2)}{F(\xi/4)}=-p\,e^{-\pi i\xi}.
\end{equation}
\end{prop}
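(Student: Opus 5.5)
The plan is to argue by contradiction, using the invertibility of the matrices $M(\xi)$ together with the fact that $F(0)=1$. Since $F_B(\xi)=F(\xi/2)$ and $F_A(\xi)=F(\xi)$, the claim $(F_A(\xi),F_B(\xi))\neq(0,0)$ says exactly that $F(\xi)$ and $F(\xi/2)$ never vanish simultaneously.

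First, note that $\det M(\xi)=-p^2e^{-\pi i\xi}\neq 0$ for every real $\xi$. Hence \eqref{eq:matrix-recursion} can be inverted:
\[
\begin{pmatrix}F_A(\xi/2)\\F_B(\xi/2)\end{pmatrix}=M(\xi)^{-1}\begin{pmatrix}F_A(\xi)\\F_B(\xi)\end{pmatrix}.
\]
Suppose $(F_A(\xi_0),F_B(\xi_0))=(0,0)$ for some $\xi_0$. Applying the inverse relation repeatedly gives $(F_A(\xi_0/2^N),F_B(\xi_0/2^N))=(0,0)$ for all $N\ge0$. This can also be seen directly: $F_B(\xi_0)=F(\xi_0/2)=0$, and \eqref{eq:3.1} at $\xi_0$ reads $0=p\cdot 0+p^2e^{-\pi i\xi_0}F(\xi_0/4)$, which forces $F(\xi_0/4)=0$. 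Inducting, we get $F(\xi_0/2^N)=0$ for all $N$. Letting $N\to\infty$ and using the continuity of the Fourier transform of the probability measure $\mu$ yields $F(0)=0$, contradicting $F(0)=1$. The only point needing care is that $p\neq0$, so the factor $p^2e^{-\pi i\xi_0}$ is nonzero.

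For the consequence, suppose $F(\xi)=0$. By the first part, $F(\xi/2)=F_B(\xi)\neq0$. Substituting $F(\xi)=0$ into \eqref{eq:3.1} gives $pF(\xi/2)=-p^2e^{-\pi i\xi}F(\xi/4)$. Since $F(\xi/2)\neq0$, the right side is nonzero, so $F(\xi/4)\neq0$. Dividing by $pF(\xi/4)$ then gives \eqref{eq:zero-phase}.

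I do not expect a real obstacle here. The key step is the backward propagation of the double zero to the origin, and it relies only on the nonvanishing of the coefficient $p^2e^{-\pi i\xi}$ and continuity at $0$.
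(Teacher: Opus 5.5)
Your proposal is correct and follows essentially the same route as the paper. In both, the double zero is propagated to $\xi_0/2^n$ by inverting $M(\xi_0)$ (possible since $\det M=-p^2e^{-\pi i\xi}\neq0$), continuity at $0$ contradicts $F(0)=1$, and \eqref{eq:zero-phase} then follows by substituting $F(\xi)=0$ into \eqref{eq:3.1} and dividing by $pF(\xi/4)\neq0$.
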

\begin{proof}
Assume, for a contradiction, that there exists $\xi_0\in\R$ with
\[(F_A(\xi_0),F_B(\xi_0))=(0,0),\]
i.e.,\ $F(\xi_0)=F(\xi_0/2)=0$. Since $\det M(\xi)=-p^{2}e^{-\pi i\xi}\neq0$ for all $\xi\in\mathbb{R}$, it follows that $M(\xi)$ is invertible. Applying $M(\xi_0)^{-1}$ to  the left in \eqref{eq:matrix-recursion} yields
\[(F(\xi_0/2),F(\xi_0/4))=(0,0).\]
Thus, by induction,
\[
F(\xi_0/2^{n})=0\qquad\text{for all } n\ge0.
\]

Since $\mu$ is a probability measure, $F$ is
uniformly continuous on $\R$.  Letting $n\to\infty$ we obtain
$F(0)=\lim_{n\to\infty}F(\xi_0/2^{n})=0$, contradicting $F(0)=1$.
Hence $(F_A(\xi),F_B(\xi))\neq(0,0)$ for every $\xi\in\R$.

Now suppose $F(\xi)=0$.  The non-vanishing of the pair
$(F(\xi),F(\xi/2))$ forces $F(\xi/2)\neq0$.  Insert $F(\xi)=0$ into \eqref{eq:3.1} to obtain
\begin{equation}\label{eq:addzerefea}
pF(\xi/2)+p^{2}e^{-\pi i\xi}F(\xi/4)=0.
\end{equation} If $F(\xi/4)$
is zero, this would imply $F(\xi/2)=0$, contradicting
$F(\xi/2)\neq0$; therefore $F(\xi/4)\neq0$.  Dividing \eqref{eq:addzerefea} by
$pF(\xi/4)$ gives the desired result
$\displaystyle\frac{F(\xi/2)}{F(\xi/4)}=-p\,e^{-\pi i\xi}$.
\end{proof}

The following proposition is well-known and easy to prove.
\begin{prop}\label{prop:difference-set}
If $E(\Lambda)$ is orthogonal in $L^{2}(\mu)$, then
$(\Lambda-\Lambda)\setminus\{0\}\subseteq\cZ(F)$. Thus $\cZ(F)=\varnothing$ implies $\#(\Lambda)\leq1$ and
$\mu$ is non-spectral.
\end{prop}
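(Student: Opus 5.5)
The plan is to compute the inner product of two exponentials directly in $L^2(\mu)$ and compare it with the Fourier transform $F=\widehat{\mu}$.

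For $\lambda,\lambda'\in\Lambda$, set $e_\lambda(x)=e^{2\pi i\lambda x}$. The first step is the identity
\[
\langle e_\lambda,e_{\lambda'}\rangle_{L^2(\mu)}=\int e^{2\pi i(\lambda-\lambda')x}\,\mathrm{d}\mu(x)=\widehat{\mu}(\lambda'-\lambda)=F(\lambda'-\lambda).
\]
Orthogonality of $E(\Lambda)$ then gives $F(\lambda'-\lambda)=0$ for all distinct $\lambda,\lambda'\in\Lambda$. As $(\lambda,\lambda')$ ranges over ordered pairs of distinct elements, $\lambda'-\lambda$ ranges over all of $(\Lambda-\Lambda)\setminus\{0\}$. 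This gives the inclusion $(\Lambda-\Lambda)\setminus\{0\}\subseteq\cZ(F)$. Conjugate symmetry $F(-\xi)=\overline{F(\xi)}$ also shows that $\cZ(F)$ is symmetric, although this is not needed.

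For the second assertion, suppose $\cZ(F)=\varnothing$. If $\Lambda$ contained two distinct points $\lambda\neq\lambda'$, then $\lambda-\lambda'$ would be a nonzero element of $\Lambda-\Lambda$, hence a zero of $F$, which is a contradiction. Therefore $\#\Lambda\le1$.

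To conclude non-spectrality, I would argue as follows. Proposition~\ref{prop:mu-singular} shows that $\mu$ is non-atomic, so its support $K$ is infinite. Then $L^2(\mu)$ is infinite-dimensional: for instance, the characteristic functions of the disjoint cylinder sets $K_{\mathbf i}$ with $|\mathbf i|=n$ are $2^n$ nonzero, pairwise orthogonal, hence linearly independent, functions. A set of at most one exponential spans a subspace of dimension at most one, so it cannot be an orthonormal basis. Hence $\mu$ has no spectrum.

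There is no real obstacle in this argument. The only point needing a line of justification is that $L^2(\mu)$ has dimension larger than $1$, which is exactly what the non-atomic property from Proposition~\ref{prop:mu-singular} provides.
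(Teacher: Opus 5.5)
Your proof is correct and follows the same route as the paper. The paper states this proposition without proof as well known, but its introduction sketches the same argument: orthogonality forces $\widehat{\mu}(\lambda-\lambda')=0$ for distinct $\lambda,\lambda'\in\Lambda$, and non-atomicity of $\mu$ makes $L^2(\mu)$ infinite-dimensional, so a set of at most one exponential cannot be a spectrum.
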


\begin{prop}\label{prop-nozero}
\(F\) does not have a zero in the interval \([-1.08,1.08]\).
\end{prop}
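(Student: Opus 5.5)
The plan is to reduce the statement, via the recursion \eqref{eq:3.1} and Proposition~\ref{prop:zero-rigidity}, to a quantitative lower bound on $F$ near the origin, which moment estimates can supply. Since $\mu$ is a real measure, $F(-\xi)=\overline{F(\xi)}$, so it suffices to treat $\xi\in[0,1.08]$.

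\textbf{Step 1 (moments).} I would first compute the mean $m=\int x\,\mathrm d\mu$ and the variance $\sigma^2=\int (x-m)^2\,\mathrm d\mu$ exactly from the self-similarity \eqref{def:self-s}, using $p+p^2=1$. For the mean, the relation
\[
m=p\,\tfrac m2+p^2\,\tfrac{m+2}{4}
\]
gives $m=\frac{p^2/2}{1-p/2-p^2/4}$. The second moment satisfies an analogous linear equation:
\[
\int x^2\,\mathrm d\mu=\tfrac p4\int x^2\,\mathrm d\mu+\tfrac{p^2}{16}\int (x+2)^2\,\mathrm d\mu .
\]
These are routine closed forms in $p=\frac{\sqrt5-1}{2}$.

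\textbf{Step 2 (small frequencies).} Next I would control $F$ near $0$ by a second-order Taylor estimate. The inequality $|e^{i\theta}-1-i\theta|\le\theta^2/2$, applied with $\theta=-2\pi\xi(x-m)$ and integrated (the linear term vanishes by the choice of $m$), gives
\[
\bigl|e^{2\pi i\xi m}F(\xi)-1\bigr|\le 2\pi^2\xi^2\sigma^2 .
\]
Because $K\subseteq[0,2/3]$, the variance is small. Hence on an initial interval $[0,\xi_0]$ the right-hand side stays below $1$, so $F\neq0$ there, and we also get a two-sided control of $F(\xi)$ around $e^{-2\pi i\xi m}$. If this alone does not reach $1.08$, I would use it only on $[0,0.54]$ (or $[0,0.27]$) and propagate.

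\textbf{Step 3 (propagation by the recursion).} Suppose $F(\xi)=0$ for some $\xi\in[0.54,1.08]$. By Proposition~\ref{prop:zero-rigidity}, writing $\eta=\xi/2\in[0.27,0.54]$, we must have
\[
\frac{F(\eta)}{F(\eta/2)}=-p\,e^{-2\pi i\eta}.
\]
Step 2 gives both $F(\eta)=e^{-2\pi i\eta m}(1+\epsilon_1)$ and $F(\eta/2)=e^{-\pi i\eta m}(1+\epsilon_2)$, with $|\epsilon_j|\le 2\pi^2\sigma^2\eta^2$ (respectively $\eta^2/4$). So the quotient has modulus at least $(1-|\epsilon_1|)/(1+|\epsilon_2|)$. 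I would check numerically, with the exact $\sigma^2$, that this exceeds $p\approx0.618$ for $\eta\le0.54$; that contradicts the required modulus $p$. Should the modulus bound be too weak, I would instead compare arguments: the quotient has argument close to $-\pi\eta m$, while $-pe^{-2\pi i\eta}$ has argument $\pi-2\pi\eta$. These differ by a definite amount on $[0.27,0.54]$, and I would control that gap with $\arcsin|\epsilon_j|$.

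\textbf{Main obstacle.} The hard part is making the numerical constants work, that is, keeping $2\pi^2\sigma^2\eta^2$ small enough up to $\eta=0.54$ for the contradiction in Step 3. If the crude variance bound falls short, my first remedy would be a sharper third-order Taylor bound using the third central moment, also computable from \eqref{def:self-s}. My second remedy would be to apply the zero-phase relation \eqref{eq:zero-phase} one level deeper, comparing $F(\xi/2)/F(\xi/4)$ with $-pe^{-\pi i\xi}$ so that only the range $[0,0.27]$ is needed. The value $1.08=4\times0.27$ suggests that the authors indeed iterate the recursion through such dyadic ranges.
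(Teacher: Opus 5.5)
Your proposal is correct and takes a genuinely different route from the paper. The numerical check you deferred does close, with room to spare. From \eqref{def:self-s} and $p^2=1-p$ one gets $m=\frac{2(1-p)}{3-p}=\frac{8-2\sqrt5}{11}\approx0.3207$ and $\sigma^2=\frac{496\sqrt5-576}{10527}\approx0.0506$, hence $2\pi^2\sigma^2\approx0.9996$.

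So Step~2 by itself only gives $F\neq0$ on $[0,1.0002)$, and Step~3 is really needed. At $\eta=0.54$ your ratio bound is $(1-0.2915)/(1+0.0729)\approx0.660>p\approx0.618$. This rules out zeros on all of $[0.54,1.08]$, and in fact up to $\xi\approx1.15$.

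You can even drop $\epsilon_2$. At a zero $\xi$, \eqref{eq:3.1} gives $|F(\xi/2)|=p\,|F(\xi/4)|\le p$, while Step~2 gives $|F(\xi/2)|\ge 1-2\pi^2\sigma^2\xi^2/4$. Together these exclude zeros up to $\xi\approx1.236$.

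The exact variance is essential. The crude bound $\sigma^2\le 1/9$, which uses only $K\subseteq[0,2/3]$, gives $1-\frac{2\pi^2}{9}(0.54)^2\approx0.36<p$ in Step~3. None of your fallbacks is needed. Your guess that $1.08=4\times0.27$ reflects dyadic iteration is also off; see below.

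The paper uses neither \eqref{eq:3.1} nor Proposition~\ref{prop:zero-rigidity} here. It shows directly that $G(\xi)=-\operatorname{Im}F(\xi)=\int\sin(2\pi\xi x)\,d\mu(x)$ is positive, in four steps:
\begin{enumerate}
\item On $(0,3/4]$ this is trivial, since $2\pi\xi x\in[0,\pi]$ for $x\in[0,2/3]$.
\item On $[3/4,1)$ it pairs the cylinder $[0,1/6]$ with $[1/2,2/3]$. Both have mass $p^2$, and $\mu$ restricted to the second is the translate by $1/2$ of $\mu$ restricted to the first. A sum-to-product identity then gives positivity, and the middle cylinder contributes nonnegatively.
\item At $\xi=1$ it proves the quantitative bound $G(1)>H:=\frac{\sqrt3}{2}p^3$, using the cylinder $[1/4,1/3]$.
\item A Lipschitz bound $|G'|\le 2\pi\int x\,d\mu\le L:=2\pi(\frac13+\frac{p^2}{6})$ and the mean value theorem push positivity to $1+H/L>1.08$. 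This is where the constant $1.08$ comes from.
\end{enumerate}
Your argument is more mechanical, needing only two moments and the zero-rigidity relation, and it gives a larger zero-free interval. The paper's cylinder-geometry argument additionally yields the sign information $\operatorname{Im}F<0$ on $(0,1]$. It reuses this in Proposition~\ref{prop:non-convol} to exclude $F(2)=-p^2F(1)$. Your Taylor estimate does not determine that sign at $\xi=1$, where the error bound is nearly $1$.
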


\begin{proof}
Since \(\mu\) is a positive measure, we have \(F(-\xi)=\overline{F(\xi)}\). Hence it suffices to prove that \(F\) has no zero in \([0,1.08]\). Define
\[
m=\sup\{\xi\ge 0: F(t)\ne 0 \text{ for all }t\in[0,\xi]\}.
\]
We will show \(m>1.08\).

First, for \(\xi\in(0,3/4]\), since \(\operatorname{supp}\mu\subseteq[0,2/3]\), we have \(2\pi\xi x\in[0,\pi]\) for \(\mu\)-a.e. \(x\). Therefore
\[
-\operatorname{Im}F(\xi)
=\int_{\mathbb R}\sin(2\pi\xi x)\,\mathrm{d}\mu(x)
>0.
\]
Thus \(F(\xi)\ne 0\), and so \(m>3/4\).

Now let \(\xi\in[3/4,1)\). By \eqref{def:self-s}, the intervals \(A=[0,1/6]\) and \(B=[1/2,2/3]\) both have \(\mu\)-measure \(p^2\). Moreover, for \(\mu\)-a.e. \(x\in K\setminus(A\cup B)\), we have \(x\in(1/6,1/2)\), hence
\[
2\pi\xi x\in(\pi\xi/3,\pi\xi)\subseteq(\pi/4,\pi),
\]
and therefore \(\sin(2\pi\xi x)\ge 0\). Consequently,
\begin{align}\label{eq:adddadeed}
\int_{\mathbb R}\sin(2\pi\xi x)\,\mathrm{d}\mu(x)
&\ge \int_A \sin(2\pi\xi x)\,\mathrm{d}\mu(x)+\int_B \sin(2\pi\xi x)\,\mathrm{d}\mu(x) \nonumber \\
&=\int_A \left(\sin(2\pi\xi x)+\sin(2\pi\xi(x+1/2))\right)\,\mathrm{d}\mu(x).
\end{align}

Using \(\sin u+\sin v=2\sin\frac{u+v}{2}\cos\frac{u-v}{2}\), the integrand in \eqref{eq:adddadeed} equals
\[
2\sin\left(2\pi\xi x+\frac{\pi\xi}{2}\right)\cos\left(\frac{\pi\xi}{2}\right).
\]
For \(x\in A\), we have
\[
2\pi\xi x+\frac{\pi\xi}{2}\in\left[\frac{\pi\xi}{2},\frac{5\pi\xi}{6}\right]
\subseteq(0,\pi),
\]
so the sine factor is positive, and also \(\cos(\pi\xi/2)>0\). Therefore the integrand in \eqref{eq:adddadeed} is strictly positive on \(A\), and thus
\[
\int_{\mathbb R}\sin(2\pi\xi x)\,\mathrm{d}\mu(x)>0.
\]
It follows that \(F(\xi)\ne 0\), and hence \(m> 1\).

At \(\xi=1\), we have the sharper lower bound
\[
\int_{\mathbb R}\sin(2\pi x)\,\mathrm{d}\mu(x)
\ge \int_{[1/4,1/3]}\sin(2\pi x)\,\mathrm{d}\mu(x)
> \sin\left(\frac{2\pi}{3}\right)p^3>0.
\]

Now define
\[
G(\xi)=\int_{\mathbb R}\sin(2\pi\xi x)\,\mathrm{d}\mu(x).
\]
We have just shown that
\[
G(1)>\frac{\sqrt{3}}{2}p^3=:H.
\]
Moreover, by the dominated convergence theorem,
\begin{align*}
\left|\frac{\mathrm{d}}{\mathrm{d}\xi}G(\xi)\right|
&\le 2\pi \int_0^{2/3}x\,\mathrm{d}\mu(x)\le 2\pi\bigg(\frac{1}{6}p^2+\frac{p^3}{3}+\frac{2}{3}p^2\bigg)\\
&=2\pi\left(\frac{1}{3}+\frac{p^2}{6}\right)=:L,
\end{align*}
where the last equality uses \(p^2+p=1\).

Then, for any \(\xi>1\), the mean value theorem gives some \(\theta\in(1,\xi)\) such that
\[
G(\xi)-G(1)=G'(\theta)(\xi-1).
\]
Thus
\[
G(\xi)>H-L(\xi-1).
\]
Taking \(\xi=1+\frac{H}{L}\), we obtain \(G(\xi)>0\), so \(F(\xi)\ne 0\). Consequently,
\[
m\ge 1+\frac{H}{L}
=1+\frac{\frac{\sqrt{3}}{2}p^3}{2\pi\left(\frac{1}{3}+\frac{p^2}{6}\right)}>1.08.
\]

Hence, we complete the proof.
\end{proof}

Now define the {\it Riccati function}
\begin{equation*}
 q(\xi)=\frac{F(\xi)}{F(\xi/2)},
\end{equation*}
which is well-defined whenever \(F(\xi/2)\neq0\). From \eqref{eq:3.1}, whenever \(q(\xi/2)\neq0\), we have
\begin{equation}\label{eq:Riccati}
 q(\xi)=p+\frac{p^{2}e^{-\pi i\xi}}{q(\xi/2)}
\end{equation}
with $q(0)=1$.
Iterating this relation formally yields the infinite-product representation
\begin{equation}\label{eq:infinite-product-q}
 F(\xi)
 =\prod_{k=0}^{\infty}q(\xi/2^{k})\,F(0)
 =\prod_{k=0}^{\infty}q(\xi/2^{k}),
\end{equation}
provided that all factors are well-defined and the product converges. If the product converges, then \(F(\xi)=0\) exactly when
\[
\prod_{k=0}^{\infty}\left|q(\xi/2^{k})\right|\to0.
\]

As an immediate consequence of \eqref{eq:zero-phase}, we state the following reformulation.

\begin{rem}
If \(F(\xi)=0\) for some \(\xi\in\mathbb R\), then \(q(\xi/2)\) is well-defined and satisfies
\begin{equation*}
 q(\xi/2)=-p\,e^{-\pi i\xi},
\end{equation*}
and consequently
\begin{equation*}
 |q(\xi/2)|=p,
 \quad\text{and}\quad
 \arg q(\xi/2)\equiv-\pi\xi \pmod{2\pi}.
\end{equation*}
Thus, a real zero at \(\xi\) forces \(q(\xi/2)\) to have modulus exactly \(p\) and argument exactly \(-\pi\xi \pmod{2\pi}\). This is a strong two-parameter restriction at each dyadic level. For \(F\) to have many real zeros, these conditions must hold coherently across all dyadic levels, which is highly restrictive.
\end{rem}

At the end of this section. We prove the following:
\begin{prop}\label{prop:non-convol}
The golden-mean self-similar measure $\mu$ has no Fourier decay.
More precisely,
\[
\lim_{k\to\infty}\widehat{\mu}(2^{k}) = L \neq 0,
\]
so that $\limsup_{|\xi|\to\infty}|\widehat{\mu}(\xi)|\ge |L|>0$.
\end{prop}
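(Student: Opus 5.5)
The plan is to restrict the functional equation \eqref{eq:3.1} to the dyadic points $\xi=2^k$, where the phase factor disappears. The values $F(2^k)$ then satisfy a constant-coefficient linear recurrence, whose limit can be written in closed form and checked to be nonzero using the positivity estimates already established in the proof of Proposition~\ref{prop-nozero}.

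Set $a_k=F(2^k)$ for $k\ge -1$. For every $k\ge 1$ the integer $2^k$ is even, so $e^{-\pi i 2^k}=1$. Then \eqref{eq:3.1} with $\xi=2^k$ gives
\[
a_k=p\,a_{k-1}+p^2a_{k-2},\qquad k\ge1 .
\]
The characteristic polynomial is $x^2-px-p^2$. Using $p^2+p=1$ and $p\varphi=1$, its roots are $p\varphi=1$ and $-p^2$.

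To avoid solving the recurrence explicitly, I will use the conserved quantity $C_k:=a_k+p^2a_{k-1}$. Indeed,
\[
C_k=p\,a_{k-1}+p^2a_{k-2}+p^2a_{k-1}=(p+p^2)a_{k-1}+p^2a_{k-2}=C_{k-1}.
\]
Hence $C_k=C_0=F(1)+p^2F(1/2)=:C$ for all $k\ge0$. The deviation $d_k:=a_k-\frac{C}{1+p^2}$ then satisfies $d_k=-p^2d_{k-1}$. Since $0<p^2<1$, this gives $d_k\to0$ geometrically, so
\[
\lim_{k\to\infty}\widehat\mu(2^k)=L:=\frac{F(1)+p^2F(1/2)}{1+p^2}.
\]

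It remains to show $L\neq0$, which is the only non-formal step. It suffices to show that $\operatorname{Im}C\neq0$, and for this I will reuse the sine estimates from the proof of Proposition~\ref{prop-nozero}.

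First, since $\operatorname{supp}\mu\subseteq[0,2/3]$ and $1/2\le 3/4$, that proof gives
\[
-\operatorname{Im}F(1/2)=\int\sin(\pi x)\,d\mu(x)>0 .
\]
Second, the same proof established
\[
-\operatorname{Im}F(1)=G(1)>\tfrac{\sqrt3}{2}p^3>0 .
\]
Both terms of $-\operatorname{Im}C=G(1)+p^2\bigl(-\operatorname{Im}F(1/2)\bigr)$ are therefore strictly positive. Hence $\operatorname{Im}C<0$, so $C\neq0$ and $L\neq0$.

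Finally, since $2^k\to\infty$ and $|\widehat\mu(2^k)|\to|L|$, we get $\limsup_{|\xi|\to\infty}|\widehat\mu(\xi)|\ge|L|>0$. I expect no real obstacle here; the one point to check is that the phase factor equals $1$ already at $k=1$, which is what allows the base value $F(1/2)$ to enter $C$.
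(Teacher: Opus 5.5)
Your proposal is correct and follows essentially the same route as the paper: the recurrence for $F(2^k)$ at dyadic points, convergence to $(F(1)+p^2F(1/2))/(1+p^2)$, and non-vanishing via $\operatorname{Im}F(1)<0$ and $\operatorname{Im}F(1/2)<0$ from the proof of Proposition~\ref{prop-nozero}. The paper writes the limit as $(F(2)+p^2F(1))/(1+p^2)$, which equals yours by $C_1=C_0$, and it solves the recurrence via its characteristic roots instead of your conserved quantity $C_k$; this difference is cosmetic.
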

\begin{proof}
Recall that \(p+p^2=1\), and the Fourier transform \(F=\widehat{\mu}\) satisfies
\begin{equation}\label{eq:idenfours}
F(\xi)=pF(\xi/2)+p^2 e^{-\pi i\xi}F(\xi/4).
\end{equation}
Set $a_k=F(2^k)$. For \(k\ge 2\), since \(e^{-\pi i 2^k}=1\), we have
\[
a_k=p a_{k-1}+p^2 a_{k-2}.
\]
The characteristic equation of the above recurrence relation is
\[
x^2-px-p^2=0,
\]
whose roots are \(1\) and \(-p^2\). By Theorem 7.2.2 in \cite{Brualdi09}, there exist constants \(a,b\) such that
\[
a_k=a+b(-p^2)^k.
\]
Since \(p<1\), \(\lim_{k\to\infty} a_k=a\) exists.

Using the initial conditions \(a_0=F(1)\) and \(a_1=F(2)\), we solve for \(a\):
\[
a=\frac{a_1+p^2a_0}{1+p^2}
=\frac{F(2)+p^2F(1)}{1+p^2}.
\]
Thus \(a=0\) is equivalent to
\begin{equation}\label{eq:minuserua}
F(2)=-p^2F(1).
\end{equation}

We now rule out this possibility. By the proof of Proposition~\ref{prop-nozero}, for all $0<\xi\le1$,
we have
\[
-\operatorname{Im}F(\xi)
=\int_{\mathbb R}\sin(2\pi\xi x)\,\mathrm{d}\mu(x)
>0.
\]
In particular,
$\operatorname{Im}F(1)<0$ and $\operatorname{Im}F(1/2)<0$.

Applying \eqref{eq:idenfours} with \(\xi=2\), we get
\[
F(2)=pF(1)+p^2F(1/2).
\]
Taking imaginary parts and using \(p,p^2>0\), we obtain
\begin{equation}\label{eq:addeqdfaefe}
\operatorname{Im}F(2)
=p\operatorname{Im}F(1)+p^2\operatorname{Im}F(1/2)
<0.
\end{equation}
On the other hand, if \eqref{eq:minuserua} holds, then
\[
\operatorname{Im}F(2)
=-p^2\operatorname{Im}F(1)>0,
\]
contradicting \eqref{eq:addeqdfaefe}. Therefore \(a\ne0\), and consequently
\[
\lim_{k\to\infty}\widehat{\mu}(2^k)=a\ne0.
\]
Hence
\[
\limsup_{|\xi|\to\infty}|\widehat{\mu}(\xi)|\ge a>0,
\]
and so \(\mu\) has no Fourier decay.
\end{proof}
\section{Numerical validation of the zero-free property}

We describe the computational procedures used to probe the zeroes of the Fourier
transform $F(\xi)=\widehat\mu(\xi)$ of the golden mean self-similar
measure, and report the main numerical
observations.
\subsection*{Calculation of the Riccati function}

The Fourier transform \(F(\xi)=\widehat{\mu}(\xi)\) of the golden-mean self-similar measure is shown by numerical experiments oscillating wildly with a trend toward zero when $\xi$ is large, see Figure \ref{fig:Flog}~(though not strictly approaching zero as is shown in Proposition \ref{prop:non-convol}). A direct zero hunting of \(F(\xi)\) for large \(|\xi|\) is then problematic (due to the risk of shallow valleys of $|F|$ near zero). To find zeros of $F$ more accurately, it is therefore more reliable to work with the Riccati function
\begin{equation}\label{eq: q computation}
	q(\xi)=\frac{F(\xi)}{F(\xi/2)},
\end{equation}
which satisfies the continued-fraction recursion \eqref{eq:Riccati}. Crucially, this recursion automatically renormalizes the value at each scale, keeping \(|q(\xi)|\) mostly within a bounded, moderate range (in fact mostly within $[0,1]$ as is observed in our numerical scans).

\begin{figure}[h]
	\centering
	\includegraphics[width=0.8\textwidth]{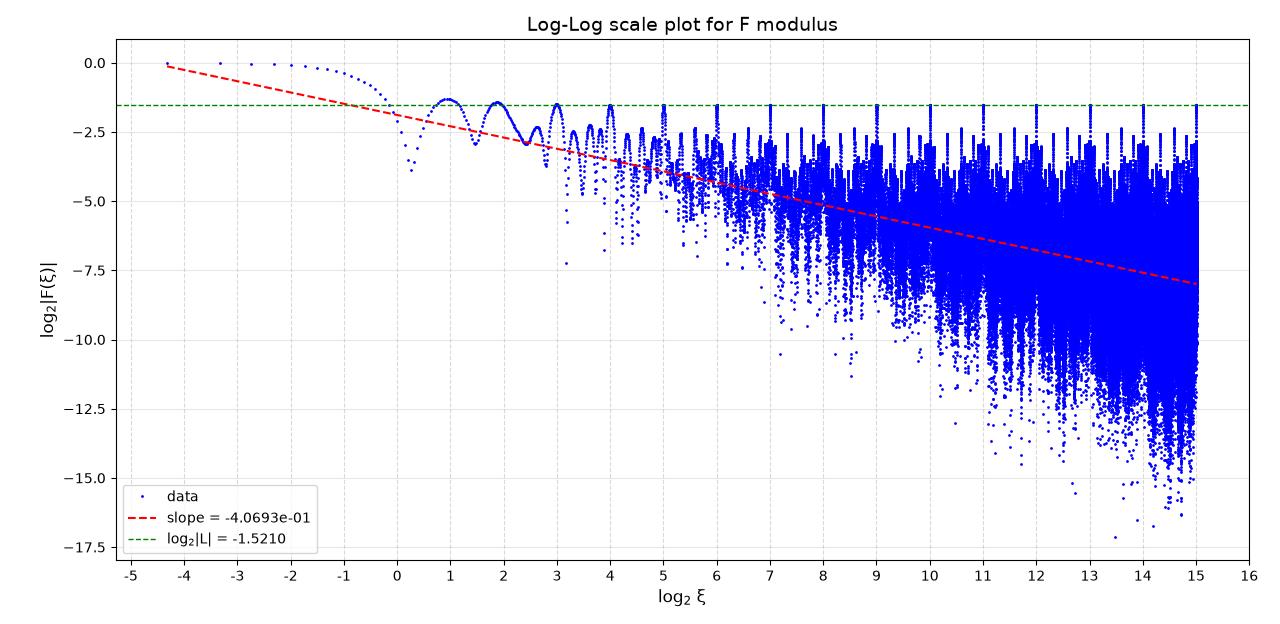}
	\caption{Plot of |F| with log scales. Notice the regular spikes corresponding to the limit number $L$ in Proposition \ref{prop:non-convol}.}
	\label{fig:Flog}
\end{figure}

The infinite product representation \(F(\xi)=\prod_{k=0}^{\infty}q(\xi/2^{k})\) (see \eqref{eq:infinite-product-q}), together with the nonzeroness of $F$ near $0$ implies that the smallest zeroes of $F$ and $q$ are equivalent. Thus, detecting zeros of \(q\) is sufficient for finding zeros for $F$.  The computation of $q$ is done by first truncating the infinite product to get an approximation to $F$ and then using \eqref{eq: q computation}.

\subsection{Algorithm for zero detection}

Our strategy for locating real zeros of \(q(\xi)\) consists of three stages:

\begin{enumerate}
	\item \textbf{Coarse uniform scan.}
	On any given interval \([a,b]\), we take a uniform step size \(h\) (typically \(h\leq 0.1\)) and compute \(q(x_i)\) and \(|q(x_i)|\) at each grid point \(x_i=a+ih\).
	The step size is chosen to be well below the oscillation period \(2\) of the exponential factor \(e^{-\pi i\xi}\), (the oscillation period \(2\) is also well observed from numerical observations below) thereby avoiding aliasing and ensuring that all local minima of \(|q|\) are related to local minimums of these sample points.
	
	\item \textbf{Complex quadratic interpolation and fast valley screening.}
	For every grid point \(x_i\) that satisfies
	\[
	|q(x_i)|\leq|q(x_{i-1})| \quad\text{and}\quad |q(x_i)|\leq|q(x_{i+1})|,
	\]
	we form the three-point complex quadratic interpolant \(P(\xi)\) through the points \((x_{i-1},q(x_{i-1})), (x_i,q(x_i)), (x_{i+1},q(x_{i+1}))\).
	Since \(q(\xi)\) is analytic on the real axis, the interpolation error is controlled by the third derivative of \(q\) and is of order \(O(h^3)\) in the interval \([x_{i-1},x_{i+1}]\).
	
	To decide whether this valley may contain a zero, we perform a {\it fast golden-section search} on \(|P(\xi)|\) with only $10$ iterations (plus an early exit if the estimated minimum falls below a given threshold \(\tau\)). This gives an estimated minimum value
	\[
	m_{\text{est}} = \min_{\xi\in[x_{i-1},x_{i+1}]} |P(\xi)|.
	\]
	If \(m_{\text{est}}<\tau\), the valley is flagged as a candidate for further refinement.
	
	\item \textbf{High-precision refinement on the original function.}
	For each flagged candidate, we apply a full golden-section minimization to the original function \(|q(\xi)|\) on the same interval \([x_{i-1},x_{i+1}]\), iterating until the interval width is below \(10^{-12}\). This yields the refined location \(\xi^*\) and the true minimum value \(|q(\xi^*)|\).
	If \(|q(\xi^*)|\) is below a very small threshold (e.g., \(10^{-10}\)), we consider it a strong numerical indication of a real zero; otherwise it is recorded as a shallow minimum.
	
\end{enumerate}

The algorithm is implemented in Python using double-precision floating-point arithmetic, with the infinite product evaluated at depth \(N=80\) (which gives an error below machine precision, at least for $F$, as shown in the error analysis below). The code is available upon request.

\subsection{Error analysis and reliability of zero detection}

We now justify that the above scanning procedure is capable of detecting every real zero in a given interval (we take $[0,10^7]$ in the following study) that may exist in the scanned interval, provided the step size \(h\) and threshold \(\tau\) are chosen appropriately. First we show the evaluation of $p$ at any given $\xi$ is accurate enough for our purpose.

\subsubsection{Error analysis for $q$ at given $\xi$}The numerical evaluation of the Riccati function $q(\xi)$ is based on the finite truncation of the infinite product \eqref{eq:infinite-product}. In the program we iterate $N=80$ levels and set the innermost vector to $(1,1)^T$. We choose $\xi\in [0,10^7]$. We now provide a error analysis, separating the truncation error (due to finite $N$) from the propagation of floating-point round-off errors.

{\it Truncation error}. Let $M(x)$ be the $2\times2$ matrix
\begin{equation*}
	M(x)=\begin{pmatrix}
		p & p^{2}e^{-\pi i x}\\
		1 & 0
	\end{pmatrix},
\end{equation*}
and define the product
\begin{equation*}
	\prod_{k=0}^{N-1} M(\xi/2^{k})
	=
	\begin{pmatrix}
		A_N & B_N\\
		C_N & D_N
	\end{pmatrix}.
\end{equation*}
The self-similarity of the Fourier transform implies the exact vector identity
\begin{equation*}
	\begin{pmatrix}
		F(\xi)\\
		F(\xi/2)
	\end{pmatrix}
	=
	\begin{pmatrix}
		A_N & B_N\\
		C_N & D_N
	\end{pmatrix}
	\begin{pmatrix}
		F(\xi/2^{N})\\
		F(\xi/2^{N+1})
	\end{pmatrix},
	\label{eq:exact}
\end{equation*}
Consequently, the true Riccati function is
\begin{equation*}
	q_{\mathrm{true}}(\xi)=\frac{F(\xi)}{F(\xi/2)}
	=
	\frac{A_N F_N + B_N F_{N+1}}
	{C_N F_N + D_N F_{N+1}},
\end{equation*}
where $F_N:=F(\xi/2^{N})$ and $F_{N+1}:=F(\xi/2^{N+1})$.

The numerical approximation amounts to replaces $F_N$ and $F_{N+1}$ by $1$, i.e.,
\begin{equation*}
	q_{\mathrm{approx}}(\xi)=\frac{A_N+B_N}{C_N+D_N}.
\end{equation*}

Now the truncation error is easily calculated as
\begin{equation*}
	q_{\mathrm{true}}(\xi)-q_{\mathrm{approx}}(\xi)
	=
	\frac{\left|\begin{matrix}
			A_N & B_N\\
			C_N & D_N
		\end{matrix}\right|(F_N-F_{N+1})}
	{F(\xi/2)\,(C_N+D_N)}.
	\label{eq:trunc_exact}
\end{equation*}

Since $\det(M(x))=-p^{2}e^{-\pi i x}$, we have
\begin{equation*}
	\left|\begin{matrix}
		A_N & B_N\\
		C_N & D_N
	\end{matrix}\right| = \prod_{k=0}^{N-1} |\det(M(\xi/2^{k}))|
	= p^{2N}.
\end{equation*}
Moreover, $F$ is a Fourier transform of a probability measure, hence $|F(x)|\le1$ for all real $x$, so $|F_N-F_{N+1}|\le2$. Therefore
\begin{equation*}
	|q_{\mathrm{true}}(\xi)-q_{\mathrm{approx}}(\xi)|
	\le
	\frac{2 p^{2N}}{|F(\xi/2)|\,|C_N+D_N|}.
	\label{eq:trunc_bound}
\end{equation*}

We note that $|C_N+D_N-F(\frac{\xi}{2})|=|(1-F_N)C_N+(1-F_{N+1})D_N|\leq c\frac{|\xi|}{2^N} $, and the observed value of $|F|$ is always larger than $10^{-10}$ hence $C_N+D_N$ can be replaced by $F(\xi/2)$ without affecting the error estimates. However, for $N=80$, $p^{2N}=p^{160}\approx 1.15\times10^{-34}$. Even if $|F(\xi/2)|$ is as small as $10^{-9}$, the truncation error is bounded by
\[
\frac{2\times1.15\times10^{-34}}{10^{-18}} \approx 2.3\times10^{-16},
\]
which is already below double-precision machine epsilon. From the graph \ref{fig:Flog}, it is clear that most of $|F|$ lies above $\frac{1}{|\xi|}$ hence we are relatively safe for  $\xi\in [0, 10^9]$ and in $[0,10^7]$ all the $|F|$ scanned are above $10^{-11}$. Hence the truncation error is negligible in IEEE double precision.

{\it Round-off error propagation.} Note the $\infty$ norm of the matrix
\[\left\|\begin{matrix}
	p & p^{2}e^{-\pi i x}\\
	1 & 0
\end{matrix}\right\|_{\infty}=1,\]
then rounding error will not amplify during matrix multiplication. Hence the total error for $F(\xi)$ is estimated to be around $N$ times machine epsilon, which is around $10^{-14}$. Hence is well below the observed minimum of $|F|$. Now noting $q(\xi)=\frac{F(\xi)}{F(\xi/2)}$ the error for $p$ is bounded by
\[\frac{|q|\Delta F(\xi)}{|F(\xi)|}+\frac{|q|\Delta F(\xi/2)}{|F(\xi/2)|},\] since the obeserved minimum of $|F|$ for $\xi\in [0,10^7]$ is around $10^{-11}$, this value will be around $10^{-2}|q|$ which is negeligible for $q$.

In practice, the main source of uncertainty is not the pointwise evaluation error, but the finite sampling step used in the coarse scans. That issue is addressed below:

\subsubsection{Error analysis for finite sampling step}

Let \(x_0<x_1<x_2\) be three consecutive grid points with \(x_2-x_0=2h\), and let \(P(\xi)\) be the unique quadratic polynomial interpolating \(q\) at these points. The standard interpolation error formula for a complex-analytic function on the real axis gives (for real and imaginary parts, respectively, note $\eta$ might differ)
\[
\text{Re(Im)}\left(q(\xi)-P(\xi)\right)=\frac{(\text{Re(Im)}q)'''(\eta)}{6}(\xi-x_0)(\xi-x_1)(\xi-x_2),\quad \eta\in[x_0,x_2],
\]
whenever \(q'''\) exists. Hence
\[
|q(\xi)-P(\xi)| \le \frac{M_3}{3\sqrt{2}}\,|\xi-x_0|\,|\xi-x_1|\,|\xi-x_2|,
\]
where \(M_3=\max_{x_0\le x\le x_2}|q'''(x)|\). Taking maximum for the right hand side, we then obtain
\[
|q(\xi)-P(\xi)| \le \frac{2}{9\sqrt{6}}\,M_3 h^3,\qquad \forall \xi\in[x_0,x_2].
\]

Then, if \(\xi_0\) is a zero of \(q\), 
\[
\min_{\xi\in[x_0,x_2]} |P(\xi)| \le |P(\xi_0)|\leq \frac{2}{9\sqrt{6}}M_3 h^3.
\]
Thus, for a fixed tolerance \(\tau>0\), as long as step size \(h< \left(\frac{9\sqrt{6}\tau}{2M_3}\right)^{\frac{1}{3}}\), the above fast valley screening method will capture any potential zeroes that lie within the interval $[x_0,x_2]$, if we don't take into account of the computation errors (which by the above analysis is negligible). The constant $M_3$ itself is bounded by certain weighted average modulus of $p$ in a neighborhood of $[x_0, x_2]$ of the complex plane (by the Cauchy integral formula). In our numerical experiments, the bound \(M_3\) is found to be stable and mostly $<300$; With \(h=0.0625\) and $\tau=0.01$, the required inequality for $h$ is then satisfied, and we will use these values in our experiment.

Given the above error estimates, the combination of a uniform grid with step \(h=0.0625\), complex quadratic interpolation, and fast golden-section screening is able to capture all local minima of \(|q|\) that have depth below \(\tau=10^{-2}\) when the bounds ($|F|$, $|q'''|$ etc.) in the error analysis are stable. The procedure is then complete in the sense that no zero can escape detection. The final refinement on the original function then provides an accurate value of \(|q|\) at the minimum, allowing us to decide whether the minimum is consistent with a true zero or merely a shallow valley.

\subsection{Main numerical results}

As the interval $[0,1]$ is already covered in thereotical analysis, we applied the algorithm to the interval \([1,10^7]\) with step size \(h=0.0625\) and screening threshold \(\tau=10^{-2}\).

\paragraph{Observed minima of \(|q|\).}
No point with \(|q(\xi)|<10^{-8}\) was found. The smallest values of \(|q|\) encountered in the entire scan are of order \(10^{-6}\). Table~\ref{tab:minima} lists the ten deepest minima detected.

\begin{table}[h]
	\centering
	\caption{The ten smallest values of \(|q(\xi)|\) found in the interval \([1,10^7]\).}
	\label{tab:minima}
	\begin{tabular}{r|c|l}
		\hline
		\(\xi\) & \(|q(\xi)|\) & $|F|$\\
		\hline
		6580077.19160276  & $4.06527633980225\times 10^{-7}$ & $7.43789361508972\times 10^{-11}$ \\
		5065070.60771886  & $4.43364745955291\times 10^{-7}$ & $3.32971488894357\times 10^{-11}$ \\
		9084534.95396908  & $8.06008614877524\times 10^{-7}$ & $2.62843487266365\times 10^{-10}$ \\
		8643446.91961220  & $8.18114308901849\times 10^{-7}$ & $1.18414733561572\times 10^{-9}$  \\
		1810801.19998039  & $8.25295565910330\times 10^{-7}$ & $7.16656736452115\times 10^{-10}$ \\
		2717521.23238440  & $8.88889998900817\times 10^{-7}$ & $2.45081991050125\times 10^{-10}$ \\
		4306582.88129792  & $1.36750074319750\times 10^{-6}$ & $6.47484921635307\times 10^{-10}$ \\
		883528.97039339   & $1.71091766017875\times 10^{-6}$ & $1.24941266331721\times 10^{-9}$  \\
		6521201.19741959  & $1.79104246408162\times 10^{-6}$ & $2.13335485173019\times 10^{-9}$  \\
		4019830.95206717  & $2.02649732849255\times 10^{-6}$ & $7.32702878565715\times 10^{-10}$ \\
		\hline
	\end{tabular}
\end{table}

\paragraph{Distribution of \(|q|\).}
Figure~\ref{fig:q_scan} and Figure~\ref{fig:q_scan2} display the graph of \(|q(\xi)|\) over two representative subinterval (e.g., around $1427800$ and $10025$, one coarse and one fine), showing the typical oscillatory behaviour with period \(2\) and aggregation of values in the $[0,1]$ interval. Over the whole scanned range, the oscillatory profile and value aggregations look similar and the minimum value never drops below \(1\times10^{-7}\). Notice the red stars indicate the local minimum points that pass the fast valley screening.

\begin{figure}[h]
	\centering
	\includegraphics[width=0.8\textwidth]{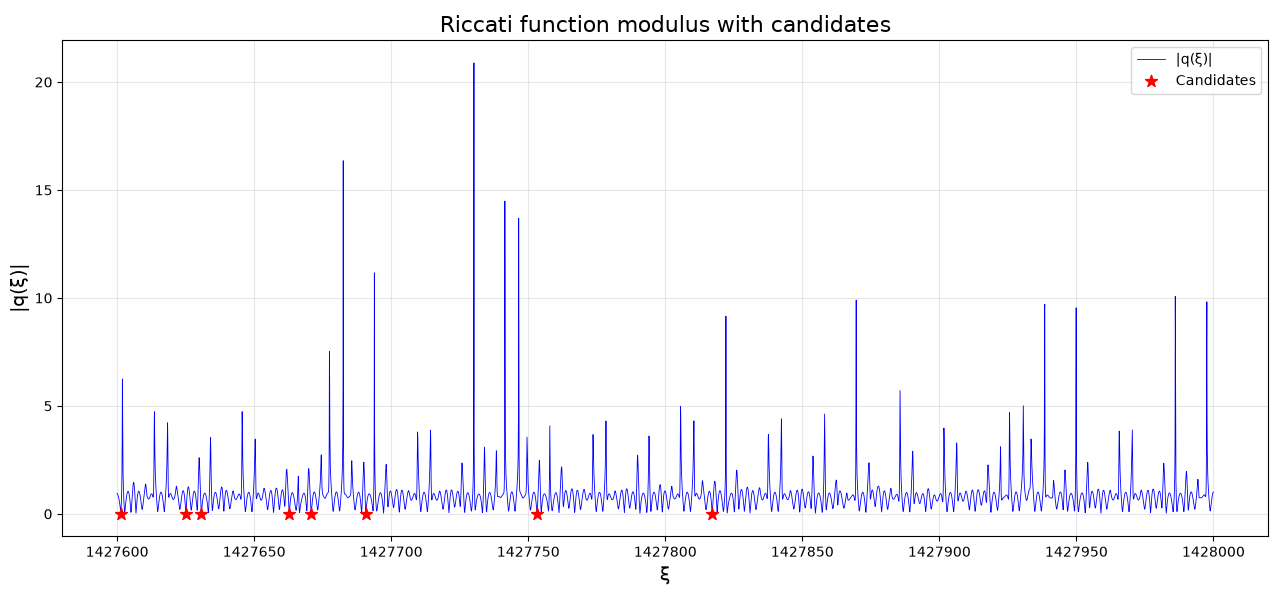}
	\caption{Plot of \(|q(\xi)|\) including interval \([1427600,1428000]\).}
	\label{fig:q_scan}
\end{figure}

\begin{figure}[h]
	\centering
	\includegraphics[width=0.8\textwidth]{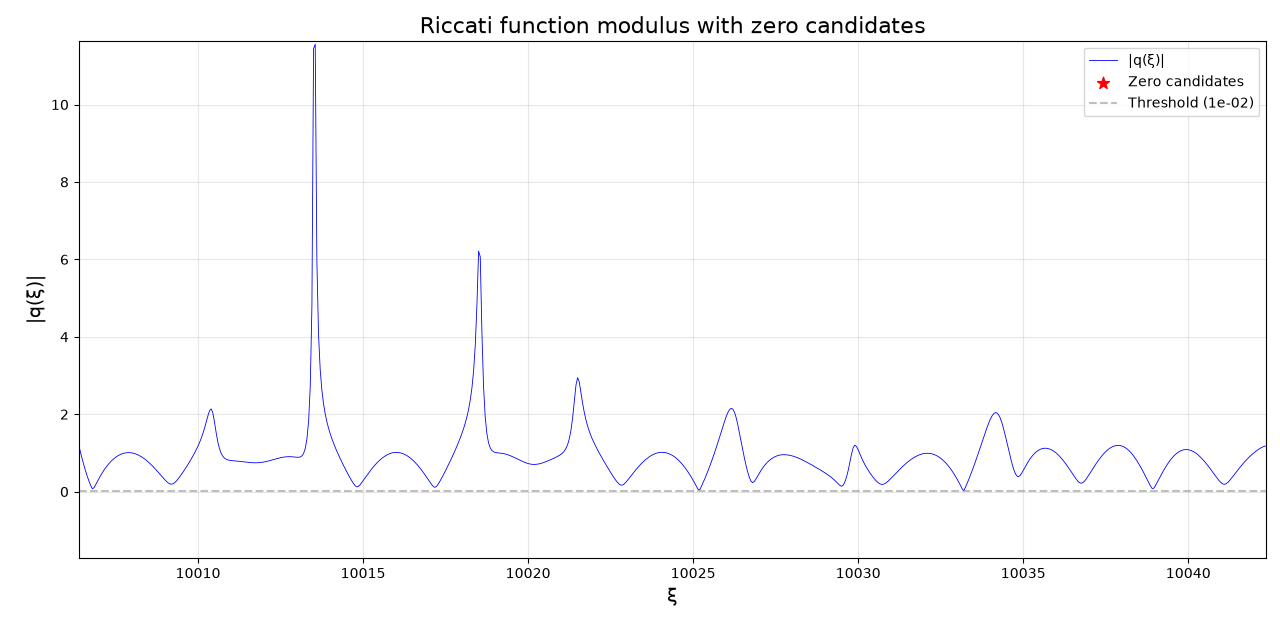}
	\caption{Plot of \(|q(\xi)|\) including interval \([10010, 10040]\). No zero candidates in this interval}
	\label{fig:q_scan2}
\end{figure}

Figure~\ref{fig:q_traj} shows the trajectory of $q$ on the complex plane with $\xi$ around $10000$. Notice the period 2 oscillatory behavior.

\begin{figure}[h]
	\centering
	\includegraphics[width=0.8\textwidth]{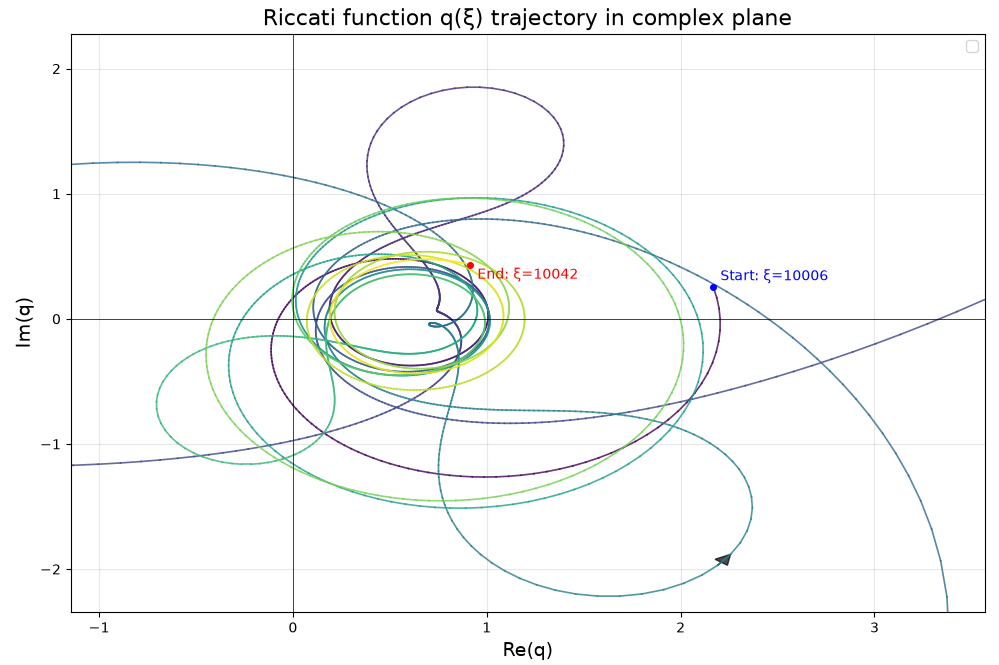}
	\caption{Plot of trajectories of \(q(\xi)\) with $\xi\in [10006, 10042]$. The color gets lighter when $\xi$ increases.}
	\label{fig:q_traj}
\end{figure}

The absence of any real zero of \(q\) (and hence of \(F\)) in the scanned range, strongly indicates that \(F\) has no real zeros at all.

\medskip

\noindent\textit{Remark.} The present work is numerical and does not constitute a formal proof. A rigorous verification would require a computer-assisted proof, which is left for future investigation. Nevertheless, consistency of our results across multiple independent checks (grid refinements, different thresholds, and comparison with direct computation through the Riccati equation) and above error analysis lends strong support to the conclusion that \(F\) has no real zeros.

\section*{Acknowledgements}
The authors would like to thank Professors Xing-Gang He and Xin-Rong Dai for drawing our attention to these problems. The second author would like to thank Professors Meng Wu and Yuan-Yang Chang for helpful discussions on the Fourier decay of self-similar measures; thank Professor Guo-Tai Deng for helpful discussion on the convolution structure of non-homogeneous self-similar measure. The authors were supported by the National Natural Science Foundation of China under Grants 12301131, 12271534 and 12301105.



\begin{thebibliography}{99}
\bibitem{AFL19}
{\sc L. X. An, X. Y. Fu, C. K. Lai}, {\it On spectral Cantor-Moran measures and a variant of Bourgain's sum of sine problem},  Adv. Math. \textbf{349} (2019), 84--124.
\bibitem{Brualdi09}
R.~Brualdi, {\it Introductory Combinatorics}, 4th ed.,
Pearson/Prentice Hall, Upper Saddle River, 2004.
\bibitem {Dai12}
{\sc X.-R. Dai}, {\it When does a Bernoulli convolution admit a
spectrum?},  Adv. Math. \textbf{231} (2012), 1681--1693.
\bibitem{Deng26}
{\sc G.-T. Deng}, {\it Private communication}.
\bibitem {DC21}
{\sc Q.-R. Deng, J. B. Chen}, {\it Uniformlity of spectral self-affine measures},  Adv. Math. \textbf{380} (2021), 107568.
\bibitem {DHLai19}
{\sc D. Dutkay, J. Haussermann, C.-K. Lai}, {\it Hadamard triples generate self-affine spectral measures}, Trans. Amer. Math. Soc. \textbf{371} (2019), 1439-1481.

\bibitem {DL14}
{\sc D. Dutkay, C.-K. Lai}, {\it Uniformly of measures with Fourier frames},  Adv. Math. \textbf{252} (2014), 684--707.


\bibitem{Fug74}
{\sc B. Fuglede}, {\it Commuting self-adjoint partial differential operators and a group theoretic problem},  J. Funct. Anal. \textbf{16} (1974), 101--121.

\bibitem{HKTW18}
{\sc X. G. He, Q. C. Kang, M. W. Tang, Z. Y. Wu}, {\it Beurling dimension and self-similar measures,} J. Funct. Anal. \textbf{274} (2018), 2245--2264.

\bibitem {Hut81}
{\sc J. Hutchinson}, {\it Fractals and self-similarity}, Indiana Univ. Math. J., \textbf{30} (1981), 713--747.

\bibitem  {JP98}
{\sc P. Jorgensen, S. Pedersen}, {\it Dense analytic subspaces in
fractal $L^2$-spaces}, J. Anal. Math. \textbf{75} (1998), 185--228.

\bibitem  {LW02}
{\sc I. {\L}aba, Y. Wang}, {\it On spectral Cantor measures}, J.
Funct. Anal.  \textbf{193} (2002), 409--420.

\bibitem{LS22}
{\sc J. L. Li, T. Sahlsten}, {\it Trigonometric series and self-similar sets}, J. Eur. Math. Soc. \textbf{24} (2022), 341--368.

\bibitem{Z26}
{\sc T. Zhang}, {\it Both directions of Fuglede's conjecture fail in dimension two}, https://arxiv.org/pdf/2607.15632.

\end{thebibliography}
\end{document}